\numberwithin{equation}{section}
\newcommand{\R}{\mathbb{R}}
\newcommand{\h}{\mathcal{H}}
\newcommand{\M}{\mathcal{M}}
\newcommand{\tw}{\mathcal{T}_\omega}
\newcommand{\intrn}{\int_{\R^N}}
\newcommand{\abr}[1]{\langle #1\rangle}
\newcommand{\ms}{\mathcal{S}}
\newcommand{\ba}{\begin{aligned}}
\newcommand{\ea}{\end{aligned}}
\begin{document}

\Year{2015} %
\Month{January}
\Vol{58} %
\No{1} %
\BeginPage{1} %
\EndPage{XX} %
\AuthorMark{TIAN R S {\it et al.}}
\ReceivedDay{November 17, 2014}
\AcceptedDay{April 21, 2015}
\PublishedOnlineDay{; published online January 22, 2014}
\DOI{10.1007/s11425-000-0000-0} 

\title{Existence and bifurcation of solutions for a double coupled system of Schr\"odinger equations}{}


\author{TIAN RuShun}{}
\author{ZHANG ZhiTao}{Corresponding author}

%
\address{Academy of Mathematics and Systems Science, and Hua
Loo-Keng Key Laboratory of Mathematics,\\ Chinese Academy of Sciences,
Beijing {\rm 100190}, P. R. China}

\Emails{rushun.tian@amss.ac.cn, zzt@math.ac.cn}
\maketitle


 {\begin{center}
\parbox{14.5cm}{\begin{abstract}
 Consider the following system of double coupled Schr\"odinger
equations arising from Bose-Einstein condensates etc.,
 \begin{equation*}
  \left\{\begin{array}{l}
           -\Delta u + u  =\mu_1 u^3 + \beta uv^2- \kappa v,\\
           -\Delta v + v  =\mu_2 v^3 + \beta u^2v- \kappa u,\\
           u\neq0, v\neq0\ \hbox{and}\ u, v\in H^1(\R^N),
          \end{array}
   \right.
 \end{equation*}where $\mu_1, \mu_2$ are positive and fixed; $\kappa$ and $\beta$ are linear and nonlinear coupling parameters respectively. We first use critical point theory and Liouville type theorem
 to prove some existence and nonexistence results on the positive solutions
 of this system. Then using the positive and non-degenerate solution of the scalar equation $-\Delta\omega+\omega=\omega^3$, $\omega\in H_r^1(\R^N)$, we construct a synchronized solution branch to prove
 that for $\beta$ in certain range and fixed, there exist a series of bifurcations in product space $\R\times H^1_r(\R^N)\times H^1_r(\R^N)$ with parameter $\kappa$.\vspace{-3mm}
\end{abstract}}\end{center}}

 \keywords{Bifurcation; System of Schr\"odinger equations; Positive solution; Synchronized solution
branch.}

 \MSC{35B32, 35B38, 35J50, 58C40, 58E07}

\renewcommand{\baselinestretch}{1.2}
\begin{center} \renewcommand{\arraystretch}{1.5}
{\begin{tabular}{lp{0.8\textwidth}} \hline \scriptsize
{\bf Citation:}\!\!\!\!&\scriptsize TIAN R S, ZHANG Z T. Science  China: Mathematics  title. Sci China Math, 2014, 57, doi: 10.1007/s11425-000-0000-0\vspace{1mm}
\\
\hline
\end{tabular}}\end{center}

\baselineskip 11pt\parindent=10.8pt  \wuhao
\section{Introduction}
In this paper, we study the following elliptic system
 \begin{equation}\label{equ:twoparameters}
  \left\{\begin{array}{l}
           -\Delta u + u  = \mu_1u^3 + \beta uv^2- \kappa v,\\
           -\Delta v + v= \mu_2v^3 + \beta u^2v -\kappa u ,\\
           u\neq0, v\neq0\ \hbox{and}\ u, v\in H^1(\R^N),
          \end{array}
   \right.
 \end{equation} where $\kappa, \beta$ are coupling parameters, $\mu_2\geq\mu_1>0$ are constants, $N=2,3$. System \eqref{equ:twoparameters} describes the standing wave solutions of coupled
 Schr\"odinger equations, which have wide applications in physics, such as nonlinear optics and Bose-Einstein condensates. See
 \cite{Esry-Greene-Burke-Bohn:1997, Mitchell-Chen-Shih-Segev:1996, Ruegg-Cavadini-etc:2003} and references therein for more details.

In the presence of only one coupling parameter, either $\kappa$ or $\beta$, extensive research has been done regarding the existence, multiplicity and asymptotic behavior of nontrivial solutions to
 \eqref{equ:twoparameters}. The obtained results are very interesting and important, we refer to \cite{Ambrosetti-Cerami-Ruiz:2008}-\cite{Ambrosetti-Colorado:2007},
 \cite{Bartsch-Wang:2006}-\cite{Dancer-Wei-Weth:2010}, \cite{Lin-Wei-CMP:2005}-\cite{Maia-Montefusco-Pellacci:2006},\cite{NTTV1}-\cite{NTTV2},\cite{Sirakov:2007}-\cite{Wei-Weth:2007}
 and references therein. If $\kappa\beta\neq0$, i.e., the linearly coupling terms and nonlinearly coupling terms both exist, to our best knowledge, analogous research is almost empty. The current paper is devoted to this double coupled case. The main goal of this paper is two-fold:
\begin{enumerate}[(i)]
 \item determine regions in $\kappa\beta$-plane for existence and nonexistence of positive solutions to \eqref{equ:twoparameters};
 \item refine the existence results in the case $\mu_1=\mu_2$ by using bifurcation theory, and get more quantitative descriptions for these solutions.
\end{enumerate}

We begin with some notations. Denote by $|\cdot|_p$ the usual  norm
of the space $L^p(\R^N)$ and $\h=H^1(\R^N)\times H^1(\R^N)$, where
$$H^1(\R^N)=\{u\in L^2(\R^N)| \nabla u\in L^2(\R^N)\}$$ is the
Hilbert space equipped with the following inner product and induced
norm $$\langle u, v\rangle:=\intrn(\nabla u\nabla
v+uv),\hspace{1cm}\|u\|:=\sqrt{(\intrn|\nabla u|^2+|u|^2)}.$$
Accordingly, the inner product and induced norm on $\h$ are given by
\begin{equation*}
 \langle(u,v),(\xi,\eta)\rangle=\intrn(\nabla u\nabla\xi+\nabla v\nabla\eta+u\xi+v\eta),\hspace{0.6cm}\|(u,v)\|_{\h}=\sqrt{\|u\|^2+\|v\|^2},
\end{equation*}
respectively. Let $\h_r:=H_r^1(\R^N)\times H_r^1(\R^N)$, where
$$H_r^1(\R^N)=\{u\in H^1(\R^N)~|~u~\hbox{is radially symmetric}\}.$$
A solution $(u, v)$ of \eqref{equ:twoparameters} is called a
positive solution if $u>0, v>0$ in $\R^N$. A solution of
\eqref{equ:twoparameters} is called a ground state solution if it
minimizes the energy functional $I_{\kappa, \beta}:\h\rightarrow\R$,
\begin{equation}\label{equ:energyfunctional}
 I_{\kappa, \beta}(u,v)=\frac{1}{2}(\|u\|^2+\|v\|^2)+\kappa\intrn uv-\frac{1}{4}\int_{\R^N}(\mu_1u^4+\mu_2v^4)- \frac{\beta}{2}\int_{\R^N}u^2v^2
\end{equation}
on the associated Nehari manifold
\begin{align*}
  \M &= \{(u, v)\in \h~|~\abr{I_{\kappa,\beta}'(u,v),(u,v)}=0,u\neq0,v\neq0\}\\
  &=\left\{(u,v)\in \h~\bigg|~ \|u\|^2+\|v\|^2+2\kappa\intrn uv=\intrn(\mu_1u^4+\mu_2v^4+2\beta u^2v^2),u\neq0,v\neq0\right\}.
\end{align*}
An important observation will be used later is that the ground state solution has positive energy if $-1<\kappa<0$. See Lemma \ref{thm:existence-asym}.

Our first result concerns the existence and nonexistence of positive solutions to \eqref{equ:twoparameters}. 

\begin{theorem}\label{thm:existence}
 System \eqref{equ:twoparameters} has
 \begin{enumerate}[(i)]
  \item no positive solution, if $\kappa<-1$ and $\beta\geq\bar{\beta}:=-(\mu_1^2\mu_2)^{1/3}$ or $\kappa=-1$ and $\beta>0$;
  \item one positive ground state solution if $\kappa\in(-1,0)$ and $\beta>0$.
 \end{enumerate}
 In particular, if $\mu_1=\mu_2$ (without loss of generality, assume $\mu_1=\mu_2=1$), \eqref{equ:twoparameters} has at least one positive solution in
  $\{(\kappa, \beta)~|~-1<\kappa\leq0, \beta\in\R\}\cup\{(\kappa, \beta)~|~\kappa>0, \beta>-1\}$.
\end{theorem}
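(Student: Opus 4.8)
The statement has three parts. For the nonexistence part (i), the plan is to use the linear coupling term to force a contradiction via a Liouville-type argument. If $(u,v)$ is a positive solution, I would add the two equations and set $w = u+v > 0$, obtaining
\[
 -\Delta w + (1+\kappa) w = \mu_1 u^3 + \mu_2 v^3 + \beta uv(u+v) - \kappa(w) + \text{(cross terms)}.
\]
More carefully, adding gives $-\Delta w + w + \kappa w = \mu_1 u^3 + \mu_2 v^3 + \beta uv^2 + \beta u^2 v$, i.e. $-\Delta w = -(1+\kappa)w + \mu_1 u^3 + \mu_2 v^3 + \beta uv(u+v)$. When $\kappa \le -1$, the term $-(1+\kappa)w \ge 0$, and when additionally $\beta$ is not too negative, the whole right-hand side becomes nonnegative (this is where the threshold $\bar\beta = -(\mu_1^2\mu_2)^{1/3}$ enters — an application of Young's inequality of the form $\mu_1 u^3 + \mu_2 v^3 + \beta uv(u+v) \ge 0$ pointwise for $\beta \ge \bar\beta$, possibly after symmetrizing). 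Then $-\Delta w \ge 0$ on $\R^N$ with $w \in H^1$, so $w$ is a nonnegative superharmonic $L^2$ function, forcing $w \equiv 0$ — contradiction. The case $\kappa = -1$, $\beta > 0$ is the clean sub-case where positivity of the right side is immediate. I expect the pointwise inequality controlling the cubic form to be the only delicate computation here; a Liouville theorem for superharmonic $H^1$ functions (stated or citable earlier) finishes it.

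**The existence part (ii): $\kappa \in (-1,0)$, $\beta > 0$.** Here I would run the Nehari manifold / constrained minimization machinery on $I_{\kappa,\beta}$ over $\M$. The steps: (1) show $\M$ is nonempty and that $c := \inf_\M I_{\kappa,\beta} > 0$ — positivity is exactly the "important observation" flagged before Lemma \ref{thm:existence-asym}, coming from $-1 < \kappa < 0$ making the quadratic form $\|u\|^2 + \|v\|^2 + 2\kappa\int uv$ coercive (it is $\ge (1+\kappa)(\|u\|^2+\|v\|^2)$) and $\beta > 0$ keeping the quartic part a genuine constraint; (2) take a minimizing sequence, pass to $\h_r$ by symmetrization (note $\kappa \int uv$ and $\beta \int u^2v^2$ both behave well under Schwarz symmetrization since $\beta>0$, $\kappa<0$), and use the compact embedding $H^1_r(\R^N) \hookrightarrow L^4(\R^N)$ for $N=2,3$ to extract a strongly convergent (in $L^4$) subsequence; (3) show the weak limit $(u,v)$ is nonzero on both components — the linear coupling makes this slightly more subtle than the single-equation case, but for $\beta > 0$ one can rule out $u \equiv 0$ or $v \equiv 0$ by testing against the equations; (4) conclude $(u,v) \in \M$ with $I_{\kappa,\beta}(u,v) = c$, hence it is a ground state critical point; (5) finally upgrade to \emph{positivity}: replace $(u,v)$ by $(|u|,|v|)$, which stays on (or can be projected onto) $\M$ without increasing energy because $\beta > 0$ and $\kappa < 0$ (so $\kappa\int uv \le \kappa \int |u||v|$... one must be careful with the sign — actually $\kappa < 0$ gives $\kappa \int uv \ge \kappa\int|u||v|$, so one should instead note $|\int uv| \le \int|u||v|$ and that the minimizer has a definite sign structure), then apply the strong maximum principle to each equation rewritten as $-\Delta u + u + \kappa v \le \mu_1 u^3 + \dots$ to get $u, v > 0$.

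**The borderline case $\mu_1 = \mu_2 = 1$.** This is the part I expect to be the main obstacle, since it asks for existence on a much larger region, including $\kappa = 0$ (no linear coupling, classical Gross–Pitaevskii system) and $\kappa > 0$ with $\beta > -1$, and including the critical line $\kappa = 0$ where part (ii) does not apply. For $\kappa = 0$ and all $\beta \in \R$, when $\mu_1 = \mu_2$ the system has the exact symmetry $u \leftrightarrow v$, so I would look for a \emph{synchronized} solution of the form $(u,v) = (sw, tw)$ with $w$ the ground state of $-\Delta w + w = w^3$; substituting reduces \eqref{equ:twoparameters} to an algebraic system in $(s,t)$ (essentially $s^2 = t^2$ and $s^2(1 + \beta) = 1$ type relations for the symmetric ansatz $s = t$), solvable precisely when $1 + \beta > 0$, i.e. $\beta > -1$ — this explains the constraint. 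For $\kappa > 0$, $\beta > -1$, I would either perturb this synchronized solution, or directly minimize $I_{\kappa,\beta}$ restricted to the symmetric subspace $\{u = v\}$ (legitimate by the principle of symmetric criticality, since $\mu_1=\mu_2$), where the functional becomes $\|u\|^2 + \kappa|u|_2^2 - \frac{1+\beta}{2}|u|_4^4 \cdot \frac{1}{2}$-ish, coercive on the Nehari set when $1+\beta>0$ regardless of the sign of $\kappa$ (for $\kappa > 0$ the quadratic part only improves). The pieces glue along the overlap $-1 < \kappa \le 0$, already covered by (ii) for $\beta > 0$ and by the symmetric-subspace argument for $\beta \le 0$. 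The technical heart, and the step most likely to need care, is verifying that the symmetric-subspace minimizer is genuinely nonzero in both components and positive for the full range $\beta > -1$ including negative $\beta$, where the quartic term can work against us unless $1+\beta$ keeps it subcritical in the right sense.
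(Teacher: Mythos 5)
Your treatments of parts (i) and (ii) essentially coincide with the paper's. For (i) the paper also adds the two equations, sets $U=u+v$, and uses a pointwise inequality of exactly the kind you describe ($\mu_1u^3+\mu_2v^3+\beta(u^2v+uv^2)\ge \mu_1^{1/3}(\mu_1^{1/3}u-\mu_2^{1/3}v)^2U$ for $\beta\ge\bar\beta$, where $\mu_2\ge\mu_1$ is used); the only difference is that it invokes the nonlinear Liouville theorem of Dancer--Wei--Weth for $-\Delta U\ge CU^3$ (resp.\ $-\Delta U+(1+\kappa)U\ge0$ with $\kappa<-1$) instead of your superharmonic-plus-$L^2$ Liouville, which also works for $N=2,3$ once you quote the decay lower bound for nonnegative superharmonic functions. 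Part (ii) is done in the paper precisely by Nehari minimization: coercivity of the quadratic form for $-1<\kappa<0$, passage to $(t_n|u_n|,t_n|v_n|)$ and Schwarz symmetrization with a scaling factor $t_n\le1$ that absorbs the sign issue you flag, compactness of $H^1_r\hookrightarrow L^4$, and the strong maximum principle.

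The genuine gap is in the final claim. The asserted region for $\mu_1=\mu_2=1$ is $\{-1<\kappa\le0,\ \beta\in\R\}\cup\{\kappa>0,\ \beta>-1\}$, and the first set contains all $\beta\le-1$. Your only tools there are the synchronized ansatz and minimization on the diagonal $\{u=v\}$, both of which require $1+\beta>0$: on the diagonal the system reduces to $-\Delta u+(1+\kappa)u=(1+\beta)u^3$, which has no nontrivial $H^1$ solution when $\beta\le-1$ (test against $u$). So no synchronized solution exists for $\beta\le-1$, and your gluing sentence in fact only covers $-1<\beta\le0$ in the strip $-1<\kappa\le0$. The paper fills the missing region $\{-1<\kappa\le0,\ \beta\le-1\}$ with a different mechanism: for each fixed $\kappa\in(-1,0]$ it performs a global bifurcation analysis in the parameter $\beta$ along the synchronized branch $\tw^+|_\kappa$ and argues (following Bartsch--Dancer--Wang) that the bifurcating branches of non-synchronized positive radial solutions are unbounded in the negative $\beta$ direction, so their projections cover $(-\infty,-1]$. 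Without such a continuation/bifurcation step (or some other construction of genuinely non-synchronized solutions), the case $-1<\kappa\le0$, $\beta\le-1$ remains unproved in your proposal.
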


The existence regions and nonexistence regions for positive solutions of \eqref{equ:twoparameters} in $\beta d$-plane are illustrated by Figure
\ref{pic:existence_kappa_beta} and Figure \ref{pic:existence_kappa_beta_radial} for asymmetric case and symmetric case respectively.

\begin{figure}[!ht]
 \centering
 \includegraphics[width=0.55\textwidth]{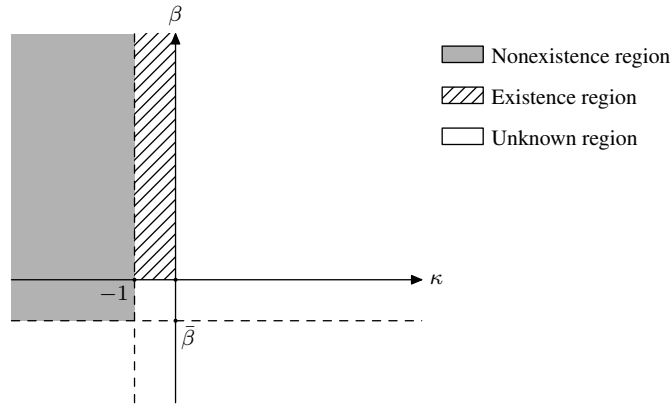}
 \caption{\footnotesize Existence and nonexistence of positive solutions when $\mu_1\neq\mu_2$}
 \label{pic:existence_kappa_beta}
\end{figure}
\begin{figure}[!ht]
 \centering
  \includegraphics[width=0.55\textwidth]{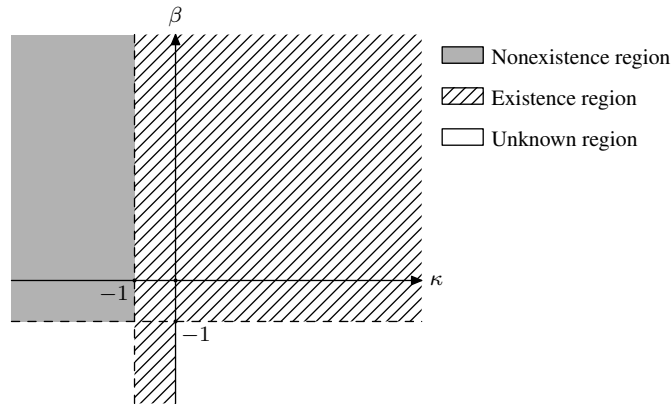}
 \caption{\footnotesize Existence and nonexistence of positive solutions when $\mu_1=\mu_2=1$,($\bar{\beta}=-1$)}
 \label{pic:existence_kappa_beta_radial}
\end{figure}
\begin{remark}
 In the case $\kappa=0$ and $\mu_1<\mu_2$, Bartsch and Wang \cite{Bartsch-Wang:2006} proved that \eqref{equ:twoparameters} has no positive solution if $\beta\in[\mu_1, \mu_2]$.
 We can see from Theorem \ref{thm:existence} (ii) that this nonexistence interval of $\beta$ vanishes when $\kappa$ becomes even sightly less than 0. Also in the case $\kappa=0$, if $\beta\notin[\mu_1, \mu_2]$, it is shown in \cite{Bartsch-Dancer-Wang:2010} that \eqref{equ:twoparameters} has at at least one positive solution in $\h_r$. This result is extended by Theorem \ref{thm:existence} (ii) to $-1<\kappa<0$ with the symmetric assumption $\mu_1=\mu_2$.
\end{remark}\par
We call a solution $(u, v)$ an {opposite sign solution} if $u>0,
v<0$ or $u<0, v>0$ in $\R^N$. It can be easily seen that system
\eqref{equ:twoparameters} is invariant under the following
transformation,
\begin{equation}\label{equ:action}
 \sigma: \R\times\R\times\h\rightarrow\R\times\R\times\h,\ \ \ \
 \sigma(\kappa, \beta, u, v)=(-\kappa, \beta, u, -v).
\end{equation}
 Using this $\sigma$-invariance of system \eqref{equ:twoparameters}, we immediately obtain a corollary of Theorem \ref{thm:existence} as follows.

\begin{corollary}
 System \eqref{equ:twoparameters} has
 \begin{enumerate}[(i)]
  \item no opposite sign solution, if $\kappa>1$ and $\beta\geq\bar{\beta}:=-(\mu_1^2\mu_2)^{1/3}$ or $\kappa=1$ and $\beta>0$;
  \item one opposite sign ground state solution if $\kappa\in(0, 1)$ and $\beta>0$.
 \end{enumerate}
 In particular, if $\mu_1=\mu_2=1$, \eqref{equ:twoparameters} has at least one pair of opposite sign solutions $(u,v)$ and $(-u,-v)$ in $\{(\kappa, \beta)~|~0\leq\kappa<1, \beta\in\R\}\cup\{(\kappa, \beta)~|~\kappa<0, \beta>-1\}$.
\end{corollary}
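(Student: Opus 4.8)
The plan is to derive the corollary directly from Theorem~\ref{thm:existence} by means of the symmetry~\eqref{equ:action}. First I would record the elementary computation underlying $\sigma$: if $(u,v)$ solves \eqref{equ:twoparameters} with coupling constants $(\kappa,\beta)$, then $(u,-v)$ solves \eqref{equ:twoparameters} with $(-\kappa,\beta)$; indeed the first equation reads $-\Delta u+u=\mu_1u^3+\beta u(-v)^2-(-\kappa)(-v)$, and multiplying the second equation by $-1$ gives $-\Delta(-v)+(-v)=\mu_2(-v)^3+\beta u^2(-v)-(-\kappa)u$. A one-line substitution shows moreover that $I_{-\kappa,\beta}(u,-v)=I_{\kappa,\beta}(u,v)$ and that $(u,v)\in\M$ for the parameters $(-\kappa,\beta)$ if and only if $(u,-v)\in\M$ for $(\kappa,\beta)$; hence $\sigma$ is a bijection between the two Nehari manifolds that preserves the energy, so it carries ground state solutions to ground state solutions and equates the two ground state levels.

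With this in hand, the sign dictionary is immediate: a positive solution $u,v>0$ of \eqref{equ:twoparameters} at $(-\kappa,\beta)$ corresponds to the opposite sign solution $(u,-v)$, with $u>0$, $-v<0$, at $(\kappa,\beta)$; conversely, an opposite sign solution of the profile $u<0,v>0$ is reduced to the profile $u>0,v<0$ via the trivial sign-flip invariance $(u,v)\mapsto(-u,-v)$ of \eqref{equ:twoparameters} (every term is odd in $(u,v)$), which is also what produces the companion member $(-u,-v)$ of the pair claimed in the last assertion.

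It then remains to translate the three parts of Theorem~\ref{thm:existence}. For (i): an opposite sign solution at $(\kappa,\beta)$ with $\kappa>1$ (resp.\ $\kappa=1$) would, after applying $\sigma$ and, if necessary, the sign-flip, yield a positive solution at $(-\kappa,\beta)$ with $-\kappa<-1$ and $\beta\geq\bar\beta$ (resp.\ $-\kappa=-1$ and $\beta>0$), contradicting Theorem~\ref{thm:existence}(i). For (ii): for $\kappa\in(0,1)$ and $\beta>0$ one has $-\kappa\in(-1,0)$, so Theorem~\ref{thm:existence}(ii) supplies a positive ground state $(u,v)$ at $(-\kappa,\beta)$, and by the energy/Nehari correspondence above $(u,-v)$ is an opposite sign ground state at $(\kappa,\beta)$. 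For the symmetric statement: the map $\kappa\mapsto-\kappa$ carries $\{-1<\kappa\leq0,\ \beta\in\R\}\cup\{\kappa>0,\ \beta>-1\}$ exactly onto $\{0\leq\kappa<1,\ \beta\in\R\}\cup\{\kappa<0,\ \beta>-1\}$, so applying $\sigma$ (followed by the sign-flip) to the positive solution furnished by Theorem~\ref{thm:existence} in the former region produces the pair $(u,v)$, $(-u,-v)$ of opposite sign solutions in the latter.

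There is essentially no analytic difficulty here; the whole argument is bookkeeping around the involution $\sigma$. The only point that deserves care is checking that $\sigma$ genuinely respects the variational framework --- that it preserves both the functional $I_{\kappa,\beta}$ and the Nehari constraint, so that the word ``ground state'' is transported correctly --- and keeping straight which of the two opposite sign profiles ($u>0>v$ or $v>0>u$) each branch of Theorem~\ref{thm:existence} delivers before one invokes $(u,v)\mapsto(-u,-v)$ to obtain the full pair.
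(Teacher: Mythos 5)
Your proposal is correct and follows exactly the route the paper intends: the corollary is stated there as an immediate consequence of Theorem~\ref{thm:existence} via the involution $\sigma$ of~\eqref{equ:action}, and your verification that $\sigma$ preserves the energy functional and the Nehari manifold (so that ``ground state'' is transported correctly), together with the sign-flip $(u,v)\mapsto(-u,-v)$ to produce the pair, is precisely the bookkeeping the authors leave to the reader.
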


The symmetric assumption $\mu_1=\mu_2$ does not only provide better
existence results, but gives rise to two synchronized solution
branches as well. More precisely, we assume without loss of
generality $\mu_1=\mu_2=1$, system \eqref{equ:twoparameters} becomes
 \begin{equation}\label{equ:symmetric}
  \left\{\begin{array}{l}
           -\Delta u + u + \kappa v = u^3 + \beta uv^2,\\
           -\Delta v + v + \kappa u = v^3 + \beta u^2v,\\
           u\neq0, v\neq0\ \hbox{and}\ u, v\in H^1(\R^N).
          \end{array}
   \right.
 \end{equation}
Let $\omega$ be the non-degenerate positive solution of the scalar equation
\begin{equation}\label{equ:scalar_omega}
 -\Delta\omega+\omega=\omega^3,\hspace{1cm}\omega>0,\hspace{0.5cm} \omega\in H_r^1(\R^N).
\end{equation}
Then system \eqref{equ:symmetric} has solutions with two linearly dependent components, $u(x)=a_1\omega(bx)$ and $v(x)=a_2\omega(bx)$. Substituting this special solution in \eqref{equ:symmetric} and using the nondegeneracy of $\omega$, we get an algebraic system,
   \begin{equation*}
    \left\{\begin{array}{l}
            a_1+\kappa a_2=a_1b^2,\\
            a_2+\kappa a_1=a_2b^2,\\
            a_1^2+\beta a_2^2 = b^2,\\
            a_2^2+\beta a_1^2 = b^2.
           \end{array}
    \right.
   \end{equation*}
   By solving this system, we obtain two synchronized solution branches in $\R\times\R\times \h_r$,
   \begin{equation*}
    \tw^+=\left\{\left(\kappa, \beta, \sqrt{\frac{1+\kappa}{1+\beta}}\omega(\sqrt{1+\kappa}x), \sqrt{\frac{1+\kappa}{1+\beta}}\omega(\sqrt{1+\kappa}x)\right)\bigg|\beta>-1, \kappa>-1\right\},
   \end{equation*}
   \begin{equation*}
    \tw^-=\left\{\left(\kappa, \beta, \sqrt{\frac{1-\kappa}{1+\beta}}\omega(\sqrt{1-\kappa}x), -\sqrt{\frac{1-\kappa}{1+\beta}}\omega(\sqrt{1-\kappa}x)\right)\bigg|\beta>-1, \kappa<1\right\}.
   \end{equation*}
Clearly, $\tw^+$ is a positive solution branch and $\tw^-$ is an opposite sign solution branch. We will be focusing on bifurcations with respect to $\tw^+$, then the bifurcation results with respect to $\tw^-$ will follow immediately by taking advantage of the $\sigma$-invariance of \eqref{equ:symmetric}.

We use $\tw^+|_\kappa$ (respectively $\tw^+|_\beta$) to denote solution branch with fixed $\kappa$ (respectively with fixed $\beta$) and parameterized in $\beta$ (respectively parameterized in $\kappa$).
Clearly, $\tw^+|_\kappa, \tw^+|_\beta\subset\R\times\h_r$.

We call $(\beta_0, u_{\beta_0}, v_{\beta_0})\in\tw^+|_\kappa$ a
bifurcation point if there exist a sequence $(\beta_j, u_j,
v_j)\in\R\times\h_r\backslash\tw^+|_\kappa$ of solutions of
\eqref{equ:symmetric} such that
$(\beta_j,u_j,v_j)\rightarrow(\beta_0, u_{\beta_0}, v_{\beta_0})$ as
$j\rightarrow\infty$. The parameter value $\beta_0$ is called a
parameter bifurcation  point. Similarly, we call $(\kappa_0,
u_{\kappa_0}, v_{\kappa_0})\in\tw^+|_\kappa$ a bifurcation point if
there exist a sequence $(\kappa_j, u_j,
v_j)\in\R\times\h_r\backslash\tw^+|_\beta$ of solutions of
\eqref{equ:symmetric} such that
$(\kappa_j,u_j,v_j)\rightarrow(\kappa_0, u_{\kappa_0},
v_{\kappa_0})$ as $j\rightarrow\infty$. The parameter value
$\kappa_0$ is also called a parameter bifurcation point.

Denote the set of all nontrivial solutions by
$$\ms:=\{(\kappa,\beta, u, v)\in\R\times\R\times\h_r\backslash\tw^+ |\ (\kappa,\beta, u, v)\ \hbox{solves \eqref{equ:symmetric}}\}.$$ Also denote the restriction of $\ms$ with
fixed $\kappa$ by $\ms^\kappa$, and with fixed $\beta$ by $\ms^\beta$
respectively. We call $\beta_0$ (respectively $\kappa_0$) a global
bifurcation parameter if
\begin{enumerate}[(i)]
 \item there is a connected set of solutions of \eqref{equ:symmetric} bifurcates from $\tw^+|_\kappa$ at $(\beta_0, u_{\beta_0}, v_{\beta_0})$ (respectively from $\tw^+|_\beta$ at $(\kappa_0, u_{\kappa_0}, v_{\kappa_0})$) that is either unbounded in $\ms^\kappa$ (respectively $\ms^\beta$), or
 \item intersect with $\tw^+|_\kappa$ at a point other than $(\beta_0, u_{\beta_0}, v_{\beta_0})$ (respectively with $\tw^+|_\beta$ at a point other than $(\kappa_0, u_{\kappa_0}, v_{\kappa_0}))$.
\end{enumerate}
 These two cases are well known as Rabinowitz's global bifurcation alternatives.

When $\kappa=0$, Bartsch, Dancer and Wang
\cite{Bartsch-Dancer-Wang:2010} proved the existence of infinitely
many bifurcation points with respect to $\tw^+|_{\kappa=0}$,  and
also gave descriptions for global bifurcation branches in radial
spaces. Similar results were also established in
\cite{Tian-Wang:2013-jan, Tian-Wang:2013-feb} for indefinite systems
and $\mu_1, \mu_2\in\R$. Recently, Bartsch \cite{Bartsch:2013}
considered a system with multiple components and established
corresponding bifurcation results. Note that, without the linearly
coupling terms, the existence of a synchronized solution branch does
not require symmetric assumption $\mu_1=\mu_2$. On the other hand,
when $\beta=0$ and $\kappa\neq0$, system \eqref{equ:twoparameters}
is linearly coupled. Ambrosetti, Ruiz and Cerami
\cite{Ambrosetti-Cerami-Ruiz:2008} gave descriptions on the ground
state solution of this type system in the cases $\kappa$ close to
$0$ and $-1$. In an unpublished manuscript by E. Abreu and Z.-Q. Wang,
the local bifurcation with respect to $\tw^+|_{\beta=0}$ and at
certain $\kappa\in(-1, 0)$ was studied.

\smallskip

In the current paper, we shall study the bifurcation phenomena of system \eqref{equ:symmetric} in the case $\kappa\neq0$ and $\beta\neq0$.

\begin{theorem}\label{thm:bifurcation}
 For any fixed $\beta\in(-1,0]$, system \eqref{equ:symmetric} has finitely many bifurcation points with respect to $\tw^+|_\beta$, where the bifurcation parameter $\kappa\in(-1, \infty)$. Moreover,
 \begin{enumerate}[(a)]
  \item the number of bifurcations and the $\h$-norm of bifurcating solutions on $\tw^+|_\beta$ both approach infinity as $\beta\rightarrow-1^+$;
  \item for each bifurcation point $(\kappa_l, u_{\kappa_l}, v_{\kappa_l})\in\tw^+|_\beta$, $l\geq1$, there is a global bifurcation branch $\ms^\beta_l\subset\R\times\h_r$.
  If $(\kappa, u, v)\in\ms^\beta_l$ with $-1<\kappa\leq0$, then $u>0$ and $v>0$.
 \end{enumerate}
\end{theorem}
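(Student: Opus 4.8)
The plan is to combine a Lyapunov--Schmidt type reduction along the synchronized branch with Crandall--Rabinowitz local bifurcation and Rabinowitz's global alternative. Fix $\beta\in(-1,0]$. Along $\tw^+|_\beta$ the solution is $(u_\kappa,v_\kappa)=(c_\kappa\,\omega_\kappa,\,c_\kappa\,\omega_\kappa)$ with $c_\kappa=\sqrt{(1+\kappa)/(1+\beta)}$ and $\omega_\kappa(x)=\omega(\sqrt{1+\kappa}\,x)$. The first step is to linearize \eqref{equ:symmetric} at this point. Writing the perturbation as $(\phi,\psi)$ and then passing to the symmetric/antisymmetric variables $p=\phi+\psi$, $q=\phi-\psi$ decouples the linearized operator, because the system is invariant under interchanging $u\leftrightarrow v$ and the branch lies in the fixed-point set. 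In the $p$-direction one gets (after rescaling $x\mapsto x/\sqrt{1+\kappa}$) an operator of the form $-\Delta + 1 - 3\omega^2$ plus a positive multiple of the identity coming from the coupling, which by nondegeneracy of $\omega$ (only a negative eigenvalue and the $N$-dimensional kernel from translations, all killed in $H^1_r$) is invertible on $H^1_r(\R^N)$ for all $\kappa>-1$. In the $q$-direction one gets, after the same rescaling, an operator $L_\kappa := -\Delta + 1 - \lambda(\kappa)\,\omega^2$ where $\lambda(\kappa)$ is an explicit decreasing function of $\kappa$ built from $\beta$ and $\kappa$ (essentially $\lambda(\kappa)=\frac{(1-\beta)}{(1+\beta)}\cdot\frac{1+\kappa}{1-\kappa}$-type expression, up to the $2\kappa$ shift); degeneracy occurs exactly when $\lambda(\kappa)$ hits one of the eigenvalues $\lambda_k$ of the weighted problem $-\Delta\xi+\xi=\lambda\,\omega^2\xi$ on $H^1_r(\R^N)$.

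The second step is the spectral input: the problem $-\Delta\xi+\xi=\lambda\,\omega^2\xi$ on $H^1_r(\R^N)$ has a discrete sequence of simple eigenvalues $1=\lambda_1<\lambda_2<\lambda_3<\cdots\to\infty$ (this is standard — $\omega^2$ decays exponentially, so the operator has compact resolvent on the radial space, and simplicity of radial eigenvalues follows from an ODE/Sturm argument; $\lambda_1=1$ with eigenfunction $\omega$ itself). Because $\lambda(\kappa)$ is continuous and strictly monotone in $\kappa$ on $(-1,\infty)$ with range a \emph{bounded} interval for each fixed $\beta\in(-1,0]$ — here $\lambda(\kappa)\to\lambda(\kappa)_{\max}<\infty$ as $\kappa\to-1^+$ and $\lambda(\kappa)$ stays below that bound — only finitely many eigenvalues $\lambda_k$ are attained, which gives the \emph{finiteness} of the set of bifurcation parameters; and since $\lambda(\kappa)_{\max}\to\infty$ as $\beta\to-1^+$, the number of $\lambda_k$'s that fall in the range tends to infinity, giving half of part (a). The other half of (a) — that $\|(u_{\kappa_l},v_{\kappa_l})\|_{\h}\to\infty$ — is immediate from the explicit formula: $\|(u_\kappa,v_\kappa)\|_{\h}^2 = \tfrac{2(1+\kappa)}{1+\beta}\|\omega_\kappa\|^2$ blows up as $\beta\to-1^+$ on the relevant $\kappa$-values (and one checks the rescaling only improves this).

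The third step is local bifurcation: at each $\kappa_l$ with $\lambda(\kappa_l)=\lambda_{k(l)}$, the kernel of the full linearization is one-dimensional (spanned by $(\xi_{k(l)},-\xi_{k(l)})$ after undoing the rescaling), the transversality condition $\frac{d}{d\kappa}\lambda(\kappa)\big|_{\kappa_l}\neq 0$ holds by strict monotonicity, and the range condition is the usual self-adjoint one; hence Crandall--Rabinowitz applies and yields a local $C^1$ curve of genuinely new solutions in $\R\times\h_r$ emanating from $(\kappa_l,u_{\kappa_l},v_{\kappa_l})$. To upgrade to a global branch $\ms^\beta_l$ one recasts \eqref{equ:symmetric} (with $\beta$ fixed) as $(u,v) = \kappa\,T(u,v) + K(u,v)$ with $T$ linear compact and $K$ compact, so that Rabinowitz's global bifurcation theorem applies and the connected component is either unbounded in $\ms^\beta$ or returns to $\tw^+|_\beta$, which is exactly the stated alternative. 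Finally, the sign statement: if $(\kappa,u,v)\in\ms^\beta_l$ with $-1<\kappa\le 0$, then by Theorem \ref{thm:existence}(ii) applied componentwise — or rather, by the strong maximum principle together with the structure of \eqref{equ:symmetric} for $\kappa\le 0$, where the coupling terms $-\kappa v$, $-\kappa u$ are nonnegative when $u,v\ge 0$ — any nontrivial limit of positive solutions stays positive, and one propagates positivity along the connected branch starting from the positive branch $\tw^+|_\beta$; a continuity/connectedness argument shows $u,v$ cannot vanish on $\{-1<\kappa\le 0\}$ without the branch leaving the solution set.

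I expect the main obstacle to be the careful bookkeeping in Step 1--2: getting the \emph{exact} expression for $\lambda(\kappa)$ and verifying both that it is strictly monotone on all of $(-1,\infty)$ and that its supremum over that interval is finite for each fixed $\beta\in(-1,0]$ but diverges as $\beta\to-1^+$. This is what simultaneously forces finiteness of the bifurcation set for fixed $\beta$ and the "infinitely many bifurcations as $\beta\to-1^+$" phenomenon, so the whole theorem hinges on that elementary but delicate computation; the nondegeneracy of $\omega$ is what guarantees the $p$-direction never contributes a spurious bifurcation, and the rest (Crandall--Rabinowitz, Rabinowitz global, maximum principle) is standard once the reduction is in place.
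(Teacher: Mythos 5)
Your skeleton matches the paper's (linearize along $\tw^+|_\beta$, decouple via $\phi\pm\psi$, kill the symmetric direction by nondegeneracy of $\omega$, reduce the antisymmetric direction to a weighted eigenvalue problem, count crossings, then Rabinowitz plus a maximum principle), but the step you yourself flag as "the main obstacle" is where the proposal actually breaks, and your guessed resolution of it is wrong in a structural way. The antisymmetric direction reduces, after rescaling $y=\sqrt{1+\kappa}\,x$, to $-\Delta\phi+\frac{1-\kappa}{1+\kappa}\phi=\frac{3-\beta}{1+\beta}\,\omega^2\phi$. This is \emph{not} equivalent to a fixed weighted problem $-\Delta\xi+\xi=\lambda(\kappa)\,\omega^2\xi$ with an explicit scalar $\lambda(\kappa)$: the $\kappa$-dependence sits in the zeroth-order coefficient $C(\kappa)=\frac{1-\kappa}{1+\kappa}$ and cannot be normalized to $1$ without rescaling $x$ again, which changes the weight $\omega^2$. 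So there is no closed-form "$\frac{1-\beta}{1+\beta}\cdot\frac{1+\kappa}{1-\kappa}$-type" dispersion relation (note also that your candidate tends to $0$ as $\kappa\to-1^+$, whereas the bifurcations must accumulate at $\kappa=-1$). The paper instead studies the genuinely $\kappa$-dependent eigenvalues $\lambda_j(\kappa)$ of $-\Delta\phi+C(\kappa)\phi=\lambda\,\omega^2\phi$ by min--max, proving continuity, strict monotonicity in $\kappa$, the two-sided Lipschitz bound $0\le\lambda_j(\kappa_1)-\lambda_j(\kappa_2)\le\bigl(C(\kappa_1)-C(\kappa_2)\bigr)/|\omega|_\infty^2$, and $\lambda_1(\kappa)\to\infty$ as $\kappa\to-1^+$ (Lemma 3.1); the bifurcation condition is then $\frac{3-\beta}{1+\beta}=\lambda_j(\kappa)$. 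Finiteness over all of $\kappa\in(-1,\infty)$ does not come from a "bounded range of $\lambda(\kappa)$" (an artifact of your reformulation) but from the uniform lower bound $\lambda_j(\kappa)\ge\lambda_j-2/|\omega|_\infty^2$ supplied by the Lipschitz estimate, combined with $\lambda_j\to\infty$. This lemma is the real content of the finiteness claim and of part (a), and your proposal leaves it unproved with an incorrect ansatz.

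The positivity step also has a gap. "Propagating positivity along the connected branch" requires showing that the positive solutions form a relatively open \emph{and} closed subset of $\ms^\beta_l\cap\{-1<\kappa\le0\}$; openness (a positive solution on the branch could be a limit of sign-changing ones) is exactly the hard part and you do not address it. The paper avoids this entirely by passing to the truncated system \eqref{equ:symmetric-cutoff} with $(u^+)^3,(v^+)^3$ and proving (Lemma 4.1) that \emph{every} nontrivial solution of the truncated system with $\kappa,\beta\in(-1,0]$ is positive: test with $u^-,v^-$, add, use $1+\kappa>0$ to get $u,v\ge0$, then apply the strong maximum principle; since nonnegative solutions of \eqref{equ:symmetric} and of the truncation coincide, the whole global branch is positive in that parameter range with no continuation argument. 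Two smaller points: your compact reformulation $(u,v)=\kappa T(u,v)+K(u,v)$ is suspect because $H^1_r(\R^N)\hookrightarrow L^2(\R^N)$ is not compact, so the linearly coupled term does not obviously give a compact $T$; and the local bifurcation in the paper is obtained from a Morse-index jump (via the Hessian computation and Mawhin--Willem) rather than Crandall--Rabinowitz, though simplicity of the radial eigenvalues makes either route viable once Lemma 3.1 is in place.
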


\begin{remark}
 The positivity of bifurcation solutions is first proved for a modified system \eqref{equ:symmetric-cutoff}, which has the same bifurcations as system
 \eqref{equ:symmetric}.
\end{remark}

Using Theorem \ref{thm:bifurcation} and the $\sigma$-invariance of system \eqref{equ:symmetric}, we obtain the following corollary.

\begin{corollary}
For any fixed $\beta\in(-1,0]$, system \eqref{equ:symmetric} has finitely many local bifurcations with respect to $\tw^-|_\beta$,
where the bifurcation parameter $\kappa\in(-\infty, 1)$. Moreover,
 \begin{enumerate}[(a)]
  \item the number of bifurcations and the $\h$-norm of bifurcating solutions on $\tw^-|_\beta$ both approach infinity as $\beta\rightarrow1^-$;
  \item for each bifurcation point $(\kappa_l, u_{\kappa_l}, v_{\kappa_l})\in\tw^-|_\beta$, $l\geq1$, there is a global bifurcation branch $\ms_l^\beta\subset\R\times\h_r$.
  If $(\kappa, u, v)\in\ms^\beta_l$ with $0\leq\kappa<1$, then either $u>0$ and $v<0$, or $u<0$ and $v>0$.
 \end{enumerate}
\end{corollary}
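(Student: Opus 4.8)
The plan is to obtain this statement as a direct consequence of Theorem~\ref{thm:bifurcation} together with the $\sigma$-invariance of \eqref{equ:symmetric}, where $\sigma$ is the involution \eqref{equ:action}. First I would record the elementary facts that $\sigma$ restricts to an involutive homeomorphism of $\R\times\R\times\h_r$ (it fixes $\beta$, reflects $\kappa\mapsto-\kappa$, and acts on the function pair by the linear isometry $(u,v)\mapsto(u,-v)$, which preserves radial symmetry), that $(\kappa,\beta,u,v)$ solves \eqref{equ:symmetric} if and only if $\sigma(\kappa,\beta,u,v)$ does, and hence that $\sigma$ maps the full solution set $\ms$ onto the analogous solution set taken relative to $\tw^-$ rather than $\tw^+$. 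The point that makes the reduction work, checked by substituting into the explicit formulas, is that $\sigma$ carries $\tw^+|_\beta$ bijectively onto $\tw^-|_\beta$: the element of $\tw^+|_\beta$ with parameter $\kappa>-1$ is sent to the element of $\tw^-|_\beta$ with parameter $-\kappa<1$, since $1+\kappa$ is turned into $1-(-\kappa)$ and the second component changes sign.

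Granting this, fix $\beta\in(-1,0]$ and apply Theorem~\ref{thm:bifurcation}: there are finitely many bifurcation points $(\kappa_l,u_{\kappa_l},v_{\kappa_l})\in\tw^+|_\beta$ with $\kappa_l\in(-1,\infty)$. Since $\sigma$ is a homeomorphism of $\R\times\h_r$ (at fixed $\beta$) that takes solution sequences to solution sequences and $\tw^+|_\beta$ onto $\tw^-|_\beta$, a point of $\tw^-|_\beta$ is a bifurcation point precisely when its $\sigma$-image is one; hence the bifurcation points with respect to $\tw^-|_\beta$ are exactly the $\sigma(\kappa_l,\beta,u_{\kappa_l},v_{\kappa_l})=(-\kappa_l,\beta,u_{\kappa_l},-v_{\kappa_l})$, finitely many, with parameters $-\kappa_l\in(-\infty,1)$. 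Because $\sigma$ fixes $\beta$ and acts isometrically on the $\h_r$-components, the number of bifurcations and the $\h$-norms of the bifurcating solutions on $\tw^-|_\beta$ coincide with those on $\tw^+|_\beta$, so part~(a) is inherited directly from Theorem~\ref{thm:bifurcation}(a).

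For part~(b), let $\ms^\beta_l\subset\R\times\h_r$ be a global branch bifurcating from $\tw^+|_\beta$ at $(\kappa_l,u_{\kappa_l},v_{\kappa_l})$, as provided by Theorem~\ref{thm:bifurcation}(b). Its image $\sigma(\ms^\beta_l)$ is connected, being a continuous image of a connected set; it emanates from $\tw^-|_\beta$ at $\sigma(\kappa_l,\beta,u_{\kappa_l},v_{\kappa_l})$; and, since $\sigma$ is a homeomorphism of $\R\times\h_r$ that preserves boundedness of subsets (it fixes $\beta$ and is an $\h_r$-isometry in the function variables) and sends $\tw^+|_\beta$ to $\tw^-|_\beta$, it satisfies one of the Rabinowitz alternatives for the $\tw^-$ problem exactly when $\ms^\beta_l$ does for the $\tw^+$ problem. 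This $\sigma(\ms^\beta_l)$ is the branch denoted $\ms^\beta_l$ in the statement. Finally, if $(\kappa,u,v)\in\sigma(\ms^\beta_l)$ with $0\le\kappa<1$, then $\sigma(\kappa,\beta,u,v)=(-\kappa,\beta,u,-v)\in\ms^\beta_l$ with $-\kappa\in(-1,0]$, so Theorem~\ref{thm:bifurcation}(b) gives $u>0$ and $-v>0$, i.e.\ $u>0$ and $v<0$; applying in addition the $\mathbb{Z}_2$-symmetry $(u,v)\mapsto(-u,-v)$ of \eqref{equ:symmetric} produces the complementary configuration $u<0$, $v>0$.

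I do not expect a substantial obstacle here: the whole argument is a transport of structure along the homeomorphism $\sigma$. The only places requiring care are the bookkeeping that $\sigma$ identifies $\tw^+|_\beta$ with $\tw^-|_\beta$ under the correspondence $\kappa\leftrightarrow-\kappa$ (so that the parameter range $(-1,\infty)$ in Theorem~\ref{thm:bifurcation} becomes $(-\infty,1)$ here), and verifying that each property borrowed from Theorem~\ref{thm:bifurcation}, namely local bifurcation, existence of a connected global branch, the global alternative, and the sign of solutions, is $\sigma$-invariant. One mild subtlety is that Theorem~\ref{thm:bifurcation}(b) only asserts positivity on $\ms^\beta_l$ for $-1<\kappa\le0$, so its $\sigma$-image yields directly only the solutions with $u>0$, $v<0$ on $0\le\kappa<1$; the second sign pattern has to be obtained from the extra reflection $(u,v)\mapsto(-u,-v)$.
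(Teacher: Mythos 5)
Your proposal is correct and is exactly the paper's (unwritten) argument: the corollary is stated as an immediate consequence of Theorem~\ref{thm:bifurcation} via the involution $\sigma$ of \eqref{equ:action}, and you have simply made the transport of structure explicit. Note only that since $\sigma$ fixes $\beta$, part~(a) should read $\beta\rightarrow-1^+$ (as your argument in fact delivers); the ``$\beta\rightarrow1^-$'' in the statement is a typo.
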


This paper is organized as follows. In Section 2, we study the
existence and nonexistence of positive solutions of system
\eqref{equ:twoparameters}. Theorem \ref{thm:existence} will be
established through a few lemmas. In Section 3, we find local bifurcations with
respect to $\tw^+|_\beta$. Moreover, we also give the number of
bifurcations and some estimates for the $\h$ norm of bifurcation
solutions as
 $\beta\rightarrow-1^+$.
In Section 4, we show the positivity of bifurcation solutions in a
certain range for the coupling parameters. Finally, Theorem
\ref{thm:bifurcation} is proved by combing the results of Section 3
and Section 4.

\section{Existence and nonexistence of positive solutions}
In this section, we study the existence and nonexistence of positive solutions to \eqref{equ:twoparameters} and \eqref{equ:symmetric} in terms of $\kappa$ and $\beta$. Lemma \ref{thm:nonexistence} and Lemma \ref{thm:existence-asym} hold for general system \eqref{equ:twoparameters}, therefore they also hold in the special case \eqref{equ:symmetric}. Lemma \ref{thm:existence-symm} is only proved for the symmetric system \eqref{equ:symmetric}.

\begin{lemma}\label{thm:nonexistence}
 System \eqref{equ:twoparameters} has no positive solution, if
 \begin{center}
  $\kappa<-1$ and $\beta\geq-(\mu_1^2\mu_2)^{1/3}$, or $\kappa=-1$ and $\beta>0$.
 \end{center}
\end{lemma}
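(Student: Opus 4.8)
The plan is to rule out positive solutions by summing the two equations in \eqref{equ:twoparameters} and running a Liouville-type argument valid in the low dimensions $N=2,3$. Suppose for contradiction that $(u,v)$ is a positive solution of \eqref{equ:twoparameters}; by elliptic regularity $u,v$ are smooth, and as elements of $H^1(\R^N)$ they lie in $L^p(\R^N)$ for every $p<\infty$ when $N=2$ and for $2\le p\le 6$ when $N=3$. Set $w:=u+v\in H^1(\R^N)$, so that $w>0$ on $\R^N$. Adding the two equations yields
\begin{equation*}
 -\Delta w+(1+\kappa)w=\mu_1u^3+\mu_2v^3+\beta uv(u+v)=:g,
\end{equation*}
equivalently $-\Delta w=g-(1+\kappa)w=g+|1+\kappa|\,w=:\rho$, since $1+\kappa\le0$ under either hypothesis.

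The first step is to analyze the sign of $g$ for $u,v>0$. If $\beta\ge0$ this is immediate: $g\ge\mu_1u^3>0$. If $\beta<0$ with $\beta\ge\bar\beta=-(\mu_1^2\mu_2)^{1/3}$, then, since $|\beta|\le(\mu_1^2\mu_2)^{1/3}$, it suffices to prove the pointwise cubic inequality
\begin{equation*}
 \mu_1a^3+\mu_2b^3\ge(\mu_1^2\mu_2)^{1/3}(a^2b+ab^2),\qquad a,b>0,
\end{equation*}
which follows by adding the two weighted arithmetic--geometric mean inequalities $\tfrac23\mu_1a^3+\tfrac13\mu_2b^3\ge(\mu_1^2\mu_2)^{1/3}a^2b$ and $\tfrac13\mu_1a^3+\tfrac23\mu_2b^3\ge\mu_1^{1/3}\mu_2^{2/3}ab^2\ge(\mu_1^2\mu_2)^{1/3}ab^2$, where the last inequality uses $\mu_2\ge\mu_1$; taking $(a,b)=(u,v)$ gives $g\ge0$. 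Hence $\rho=g+|1+\kappa|\,w\ge0$ on all of $\R^N$, and in fact $\rho>0$ pointwise: when $\kappa=-1$ (where $\beta>0$) one has $\rho=g\ge\mu_1u^3>0$, and when $\kappa<-1$ one has $\rho\ge|1+\kappa|\,w>0$. In particular $\intrn\rho>0$.

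The final step is the Liouville-type contradiction. Fix $\phi\in C_c^\infty(\R^N)$ with $\phi\equiv1$ on the unit ball, $\operatorname{supp}\phi\subset\{|x|<2\}$, $0\le\phi\le1$, and set $\phi_R(x)=\phi(x/R)$. Integrating by parts and using that $\Delta\phi_R$ is supported in $\{R\le|x|\le2R\}$ with $|\Delta\phi_R|\le CR^{-2}$, together with $w\in L^2(\R^N)$ and Cauchy--Schwarz,
\begin{equation*}
 0\le\intrn\rho\,\phi_R=\intrn w\,(-\Delta\phi_R)\le CR^{-2}|w|_2\,|\{|x|\le2R\}|^{1/2}=C'|w|_2\,R^{N/2-2}\longrightarrow0\quad(R\to\infty),
\end{equation*}
because $N/2-2<0$ for $N=2,3$. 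Since $\rho\ge0$ and $\phi_R\uparrow1$ pointwise, the monotone convergence theorem gives $\intrn\rho\,\phi_R\to\intrn\rho$, whence $\intrn\rho=0$, contradicting $\intrn\rho>0$. Therefore no positive solution exists. The one genuinely delicate point is the sign analysis of $g$: the threshold $\beta\ge-(\mu_1^2\mu_2)^{1/3}$ is exactly what makes the cubic form $\mu_1u^3+\mu_2v^3+\beta uv(u+v)$ nonnegative on the positive cone $\{u>0,\,v>0\}$; once that is in place, the summation trick and the cutoff estimate close the argument routinely.
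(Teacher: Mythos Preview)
Your argument is correct and follows the same overall strategy as the paper: sum the two equations, set $w=u+v$, and derive a Liouville-type contradiction from the resulting differential inequality $-\Delta w=g+|1+\kappa|\,w\ge0$. The differences are in execution rather than in idea. For the nonnegativity of $g=\mu_1u^3+\mu_2v^3+\beta uv(u+v)$ when $\bar\beta\le\beta\le0$, the paper records the algebraic factorization $g\ge\mu_1^{1/3}(\mu_1^{1/3}u-\mu_2^{1/3}v)^2\,w$, whereas you obtain $g\ge0$ from weighted AM--GM; both routes identify the same sharp threshold $\bar\beta=-(\mu_1^2\mu_2)^{1/3}$. For the Liouville step, the paper invokes Theorem~2.3 of Dancer--Wei--Weth as a black box (and in the case $\kappa\le-1$, $\beta>0$ actually uses the stronger inequality $-\Delta w\ge Cw^3$ with $C=\min\{\mu_1,\beta/3\}$), while you give a direct, self-contained proof via cutoff functions, using only $w\in L^2(\R^N)$ and $N\le3$ to force $\int\rho\,\phi_R\to0$. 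Your version has the advantage of being elementary and reference-free; the paper's version is terser and extracts a slightly more quantitative superharmonic inequality in the first case.
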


\begin{proof} Assume for contradiction that $(u, v)$ is a positive solution of \eqref{equ:twoparameters}. {By the elliptic regularity theory and Sobolev embeddings, $u, v\in C^2(\R^N)$.}

Let us consider the case $\kappa\leq-1$, $\beta>0$ first. Add the two equations of \eqref{equ:twoparameters} together and set $U=u+v$, then we get
\begin{equation*}
 -\Delta U\geq-\Delta U +(1+\kappa) U= \mu_1u^3+\mu_2v^3+\beta u^2v+\beta uv^2\geq C U^3,
\end{equation*}
where $C=\min\{\mu_1, \beta/3\}$. Because $U\geq0$, according to
Liouville-type Theorem 2.3 of \cite{Dancer-Wei-Weth:2010}, we must
have $U\equiv0$. This is a contradiction.

Next, consider the case $\kappa<-1$ and $-(\mu_1^2\mu_2)^{1/3}\leq\beta\leq0$.
Again, adding the two equations of \eqref{equ:twoparameters} together and setting $U=u+v$, we get
\begin{equation*}
 -\Delta U +(1+\kappa) U= \mu_1u^3+\mu_2v^3+\beta u^2v+\beta uv^2\geq \mu_1^\frac{1}{3}(\mu_1^\frac{1}{3}u-\mu_2^\frac{1}{3}v)^2U.
\end{equation*}
In the case $\kappa<-1$, Liouville-type Theorem 2.3 of
\cite{Dancer-Wei-Weth:2010} implies $U\equiv0$. This is a
contradiction since $U\geq\max\{ u, v\}$ and $u, v$ are both
positive solutions.

The proof is completed. 
\end{proof}

\bigskip

The proof of the following existence lemma is modified from \cite{Ambrosetti-Cerami-Ruiz:2008}.

\begin{lemma}\label{thm:existence-asym}
 System \eqref{equ:twoparameters} has a positive ground state solution for $-1<\kappa<0$ and $\beta>0$.
\end{lemma}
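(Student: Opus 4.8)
The plan is to produce the positive ground state solution as a minimizer of the energy functional $I_{\kappa,\beta}$ on the Nehari manifold $\M$, working in the radially symmetric subspace $\h_r$ to recover compactness, and then upgrading the minimizer to a \emph{positive} solution. First I would check that for $-1<\kappa<0$ and $\beta>0$ the functional $I_{\kappa,\beta}$ is well-defined, $C^1$, and bounded below on $\M$, and that $\M$ is non-empty: given any $(u,v)\in\h$ with $u\neq0$, $v\neq0$, the scaling $(tu,tv)$ hits $\M$ at a unique $t>0$ because the quadratic form $\|u\|^2+\|v\|^2+2\kappa\intrn uv$ is positive definite for $|\kappa|<1$ (so the left side is a positive multiple of $t^2$) while the right side is a positive multiple of $t^4$ (positivity of $\mu_1u^4+\mu_2v^4+2\beta u^2v^2$ uses $\beta>0$). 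On $\M$ one has $I_{\kappa,\beta}(u,v)=\tfrac14(\|u\|^2+\|v\|^2+2\kappa\intrn uv)\geq \tfrac{1-|\kappa|}{4}\|(u,v)\|_\h^2$, which is $\geq0$ and in fact bounded away from $0$ on $\M$ (this is the ``positive energy'' observation mentioned before the lemma), and a minimizing sequence is bounded in $\h$.

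Next I would pass to a minimizing sequence in $\h_r$: by the Schwarz symmetrization / Strauss-type argument the infimum of $I_{\kappa,\beta}$ over $\M$ equals the infimum over $\M\cap\h_r$ (symmetrizing decreases the $L^4$-type terms appropriately and does not increase the gradient terms; since $\kappa<0$ the term $\kappa\intrn uv$ behaves favorably under rearrangement). A bounded sequence in $\h_r$ has, up to a subsequence, a weak limit $(u_0,v_0)$ with strong convergence in $L^4(\R^N)\times L^4(\R^N)$ by the compact embedding $H^1_r(\R^N)\hookrightarrow L^p(\R^N)$ for $2<p<2^*$ (valid for $N=2,3$). The strong $L^4$ convergence forces the Nehari constraint to pass to the limit and, combined with the lower bound on the energy, prevents $(u_0,v_0)$ from being trivial; weak lower semicontinuity of the norm then gives that $(u_0,v_0)\in\M\cap\h_r$ realizes the infimum. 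A Lagrange-multiplier computation on $\M$ (the multiplier vanishes because $\langle I'_{\kappa,\beta}(u_0,v_0),(u_0,v_0)\rangle=0$ and the derivative of the constraint is non-degenerate there) shows $(u_0,v_0)$ is a critical point of $I_{\kappa,\beta}$, hence a (radial) solution of \eqref{equ:twoparameters}.

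It remains to make the solution componentwise positive, and I expect this to be the main obstacle, precisely because of the linear coupling term $-\kappa v$, which destroys the usual trick of replacing $(u,v)$ by $(|u|,|v|)$. The key point is the sign of $\kappa$: since $-1<\kappa<0$, we have $-\kappa>0$, so replacing $(u,v)$ by $(|u|,|v|)$ \emph{decreases} $\kappa\intrn uv$ (because $\intrn|u||v|\geq|\intrn uv|\geq \intrn uv$ when $\kappa<0$ means $\kappa\intrn|u||v|\leq\kappa\intrn uv$) while leaving the quartic terms unchanged, so $(|u_0|,|v_0|)$ lies on (or below, after rescaling back to) $\M$ with energy no larger than that of $(u_0,v_0)$; hence we may assume $u_0\geq0$, $v_0\geq0$. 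Then each component satisfies an inequality of the form $-\Delta u_0+u_0\geq -\kappa v_0\geq0$ and $u_0\not\equiv0$ (if $u_0\equiv0$ the first equation forces $\kappa v_0\equiv0$, so $v_0\equiv0$, contradicting nontriviality), so the strong maximum principle gives $u_0>0$, and symmetrically $v_0>0$. Finally, elliptic regularity and Sobolev embedding (as in the proof of Lemma \ref{thm:nonexistence}) give $u_0,v_0\in C^2(\R^N)$, completing the proof that $(u_0,v_0)$ is a positive ground state solution. The delicate sign bookkeeping in the symmetrization step, and verifying that the infimum over $\M$ is attained rather than lost to translation (handled by restricting to $\h_r$ from the outset), are the places requiring the most care.
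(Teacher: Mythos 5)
Your proposal is correct and follows essentially the same route as the paper: minimization of $I_{\kappa,\beta}$ on the Nehari manifold, using $-1<\kappa<0$ for the positive-definite quadratic form and positive energy lower bound, replacing the minimizing sequence by nonnegative Schwarz-symmetrized functions (where $\kappa<0$ and $\beta>0$ make the rearrangement inequalities go the right way), recovering compactness from $H^1_r(\R^N)\hookrightarrow L^4(\R^N)$, killing the Lagrange multiplier via $\langle F'(u_0,v_0),(u_0,v_0)\rangle<0$, and finishing with the strong maximum principle. The only cosmetic difference is that you symmetrize at the level of the limit/sequence in a slightly different order than the paper's four-step layout, which does not change the argument.
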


\begin{proof}  The proof is divided into four steps.

Step 1: $I_{\kappa,\beta}|_{\M}$ has positive lower bound.

{\allowdisplaybreaks
Actually, by Sobolev embeddings and Cauchy inequality, 
\begin{align*}
 \|u\|^2+\|v\|^2+2\kappa\intrn uv&\geq\|u\|^2+\|v\|^2+2\kappa\intrn|u||v|\\
 &\geq(1+\kappa)(\|u\|^2+\|v\|^2)\\
 &\geq\frac{1+\kappa}{2}(\|u\|+\|v\|)^2,\\
 \intrn(\mu_1u^4+\mu_2v^4+2\beta u^2v^2)&\leq\intrn(\mu_1u^4+\mu_2v^4)+\beta\bigg(\intrn u^4+\intrn v^4\bigg)\\
 &\leq(\mu_2+\beta)(|u|^4_4+|v|^4_4)\\
 &\leq C(\mu_2+\beta)(\|u\|+\|v\|)^4,
\end{align*}
where $C>0$ is the embedding constant for $H^1(\R^N)\hookrightarrow L^4(\R^N)$. Thus for any $(u,v)\in\M$, the above estimates yield
\begin{equation}\label{equ:normlowerbound}
 (\|u\|+\|v\|)^2\geq\rho:=\frac{1+\kappa}{2C(\mu_2+\beta)}>0.
\end{equation}
 Therefore
\begin{equation}\label{equ:energylowerbound}
 I_{\kappa,\beta}|_{\M}=\frac{1}{4}\bigg(\|u\|^2+\|v\|^2+2\kappa\intrn uv\bigg)\geq\frac{(1+\kappa)\rho}{8}>0.
\end{equation}}

Step 2: The critical point of $I_{\kappa,\beta}|_{\M}$ is also a critical point of $I_{\kappa,\beta}$.

Assume that $(u_0, v_0)$ is a critical point of $I_{\kappa,\beta}|_{\M}$. Denote by $F(u,v)=\abr{I_{\kappa,\beta}'(u,v),(u,v)}$, then there exists a Lagrange multiplier $\Lambda$ such that
\begin{equation}\label{equ:Lmultiplierequ}
 I'_{\kappa,\beta}(u_0,v_0)=\Lambda F'(u_0,v_0).
\end{equation}
Apply both sides to $(u_0, v_0)$ and note that $(u_0, v_0)\in\M$,
\begin{equation}\label{equ:Lmultiplierzero}
 0=\abr{I'_{\kappa,\beta}(u_0,v_0),(u_0,v_0)}=\Lambda\abr{F'(u_0,v_0),(u_0,v_0)}.
\end{equation}
Recall the lower bound of $\h$-norm on $\M$ \eqref{equ:normlowerbound}, then
\begin{align*}
 \abr{F'(u_0,v_0),(u_0,v_0)}&=-2(\|u_0\|^2+\|v_0\|^2)-4\kappa\intrn u_0v_0\\
 &\leq-2(1+\kappa)(\|u\|^2+\|v\|^2)\\
 &\leq-(1+\kappa)\rho<0.
\end{align*}
Combine with \eqref{equ:Lmultiplierzero} we get $\Lambda=0$. Now \eqref{equ:Lmultiplierequ} gives $I'_{\kappa,\beta}(u_0,v_0)=0$, i.e. $(u_0, v_0)$ is a critical point of $I_{\kappa,\beta}$.

Step 3: $I_{\kappa,\beta}|_{\M}$ satisfies the PS condition.

Since $\kappa\in(-1,0)$, we introduce a new norm on $\h_r$:
\begin{equation*}
 \|(u,v)\|_1=\sqrt{\|u\|^2+\|v\|^2+2\kappa\intrn uv}.
\end{equation*}
It is easy to verify that
\begin{equation}\label{equ:equivnorm}
 (1+\kappa)\|(u,v)\|_\h\leq\|(u,v)\|_1\leq\sqrt{2}\|(u,v)\|_\h,
\end{equation}
i.e., $\|(\cdot,\cdot)\|_\h$ and $\|(\cdot,\cdot)\|_1$ are
equivalent norms on $\h_r$. Now let $\{(u_n, v_n)\}\subset\h_r$ be a
PS sequence of $I_{\kappa,\beta}|_{\M}$, i.e., there exists $c\in\R$
such that
\begin{equation*}
 I_{\kappa,\beta}|_{\M}(u_n,v_n)\rightarrow c,\hspace{1cm} I'_{\kappa,\beta}|_{\M}(u_n,v_n)\rightarrow0\ \hbox{as}\ n\rightarrow\infty.
\end{equation*}
It is easy to see from \eqref{equ:energylowerbound} and
\eqref{equ:equivnorm} that $\|(u_n, v_n)\|_1$ is bounded,  then
there exists a subsequence of $\{(u_n, v_n)\}$, still denoted by
$\{(u_n, v_n)\}$ for simplicity, which weakly converges to $(u, v)$
in $\h_r$ with the topology induced by $\|(\cdot,\cdot)\|_1$. Using
the compact embedding $H^1_r(\R^N)\hookrightarrow L^4(\R^N)$, we get $\intrn u_n^4\rightarrow\intrn
u^4$, $\intrn v_n^4\rightarrow\intrn v^4$, and by H\"older
inequality, 
\begin{align*}
 \bigg|\intrn(u_n^2v_n^2-u^2v^2)\bigg|&\leq\intrn u_n^2|v_n^2-v^2|+\intrn v^2|u_n^2-u^2|\\
 &\leq|u_n|_4^2\left(\intrn(v_n+v)^2(v_n-v)^2\right)^\frac{1}{2}+|v|_4^2\left(\intrn(u_n+u)^2(u_n-u)^2\right)^\frac{1}{2}\\
 &\leq|u_n|_4^2\cdot|v_n+v|_4\cdot|v_n-v|_4+|v|_4^2\cdot|u_n+u|_4\cdot|u_n-u|_4\\
 &\rightarrow0,
\end{align*}
as $n\rightarrow\infty$. Combine the definition of $\M$ and the above
limits, we get $\|(u_n,v_n)\|_1\rightarrow\|(u,v)\|_1$ as
$n\rightarrow\infty$. The norm convergence and the weak convergence together imply
$(u_n,v_n)\rightarrow(u,v)$ strongly in $\h_r$. Therefore
$I_{\kappa,\beta}|_{\M}$ satisfies the PS condition.

Step 4: $\inf_\M I_{\kappa,\beta}$ can be achieved by a positive and radially
symmetric function.

Let $\{(u_n, v_n)\}$ be a minimizing sequence of
$I_{\kappa,\beta}(u, v)|_\M$. Then there exist a positive sequence
$\{t_n\}$ such that $(t_n|u_n|, t_n|v_n|)\in\M$. Since
$-1<\kappa<0, \beta>0$, there holds
 {\allowdisplaybreaks
\begin{align*}
  t_n^2&=\frac{\|u_n\|^2+\|v_n\|^2+2\kappa\int |u_nv_n|}{\int(\mu_1u_n^4 +\mu_2v_n^4+2\beta\int|u_n|^2|v_n|^2}
  \leq\frac{\|u_n\|^2+\|v_n\|^2+2\kappa\int u_nv_n}{\int(\mu_1u_n^4 +\mu_2v_n^4+2\beta\int u_n^2v_n^2}=1.
 \end{align*}
 According to the definition of $I_{\kappa, \beta}|_\M$,
 \begin{align*}
  I_{\kappa,\beta}(t_n|u_n|, t_n|v_n|) &=\frac{t_n^2}{4} \left(\|u_n\|^2+\|v_n\|^2+2\kappa\int_{\R^N}|u_nv_n| dx\right) \\
  &\leq\frac{1}{4} \left(\|u_n\|^2+\|v_n\|^2+2\kappa\int_{\R^N}u_nv_n dx\right)\\
  &=I_{\kappa,\beta}(u_n, v_n).
 \end{align*}}
 Therefore, we can assume $u_n \geq0, v_n\geq0$.

Denote by $u^*_n, v^*_n$ the Schwartz symmetrization of $u_n$ and $v_n$ respectively. Similar to the above arguments, there exists $t_n^*>0$ such that $(t_n^*u_n^*, t^*_nv_n^*)\in\M$. Since
 \begin{equation*}
  \|u_n^*\|^2\leq\|u_n\|^2,\ \ \ \intrn u_n^*v_n^*\geq\intrn u_nv_n,\ \ \ |u_n^*|_4=|u_n|_4,
 \end{equation*}
the same arguments yields $t^*_n\leq1$ and $I_{\kappa,\beta}(t^*_nu^*_n, t^*_nv_n^*)\leq I_{\kappa,\beta}(u_n, v_n)$.
 Thus we can also assume that the minimizing sequence consists of radial functions. By Step 3, $\inf_\M I_{\kappa,\beta}$ has a positive radial minimizer, $(u_{\kappa,\beta}, v_{\kappa,\beta})$, which gives a critical point of $I_{\kappa, \beta}|_\M$. Then by Step 2, this is also a critical point of $\inf I_{\kappa,\beta}$. By the strong maximum principle we get $u_{\kappa,\beta}>0$ and $v_{\kappa,\beta}>0$, i.e. system \eqref{equ:twoparameters} has a positive ground state solution. 

The proof is completed.
\end{proof}

\begin{remark}
 In the estimates of $t_n$ and $t_n^*$, the conditions $\kappa\in(-1,0)$ and $\beta>0$ are used, thus the proof of Lemma \ref{thm:existence-asym} seems hard to be used to find solutions of \eqref{equ:twoparameters} for other values of $\kappa$ or $\beta$.
\end{remark}

For the symmetric system \eqref{equ:symmetric}, we can get solutions on a larger region in $\kappa\beta$-plane.

\begin{lemma}\label{thm:existence-symm}
  System \eqref{equ:symmetric} has at least one positive solutions for every $\kappa>-1$ and $\beta>-1$, {and has multiple positive solutions for $-1<\kappa\leq0$ and $\beta\leq-1$.}
\end{lemma}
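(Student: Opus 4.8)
The first half of the lemma is immediate: for every $\kappa>-1$, $\beta>-1$ the synchronized branch $\tw^+$ built in the Introduction already provides the positive solution $\bigl(\sqrt{\frac{1+\kappa}{1+\beta}}\,\omega(\sqrt{1+\kappa}\,x),\ \sqrt{\frac{1+\kappa}{1+\beta}}\,\omega(\sqrt{1+\kappa}\,x)\bigr)$ of \eqref{equ:symmetric} (both radicands are positive and $\omega>0$). So the real content is the production of \emph{two} positive solutions in the range $-1<\kappa\le0$, $\beta\le-1$, and my plan there is: (i) obtain one positive radial solution $(u_0,v_0)$ by minimizing $I_{\kappa,\beta}$ over the radial Nehari set; (ii) show $u_0\not\equiv v_0$, since the diagonal $u=v$ carries no positive solution once $\beta\le-1$; (iii) invoke the invariance of \eqref{equ:symmetric} under $(u,v)\mapsto(v,u)$ to get a second, distinct positive solution $(v_0,u_0)$.

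For (i) I would work with $\M_r:=\M\cap\h_r$, where $\M$ is the Nehari manifold of $I_{\kappa,\beta}$ in \eqref{equ:energyfunctional}, following the scheme of the proof of Lemma \ref{thm:existence-asym} with the adjustments forced by $\beta<0$. The set $\M_r$ is nonempty: picking nonzero radial $u,v$ with disjoint supports gives $\intrn u^2v^2=0$, hence $\intrn(u^4+v^4+2\beta u^2v^2)=\intrn(u^4+v^4)>0$, so a dilation $(tu,tv)$ lands in $\M_r$ — note this is not automatic when $\beta\le-1$, which is exactly why separated supports are needed. The bound $I_{\kappa,\beta}|_{\M_r}\ge c_0>0$ comes as in Step~1 (using $\kappa>-1$ on the quadratic part; the quartic estimate is only easier since $\beta<0$). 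Compactness of minimizing (or Palais--Smale) sequences in $\h_r$ follows from the compact embedding $H^1_r(\R^N)\hookrightarrow L^4(\R^N)$ and the Hölder estimate for $\intrn(u_n^2v_n^2-u^2v^2)$ of Step~3, so $\inf_{\M_r}I_{\kappa,\beta}$ is attained at some $(u_0,v_0)\in\M_r$; the Lagrange-multiplier argument of Step~2 (the multiplier vanishes because $\abr{F'(u_0,v_0),(u_0,v_0)}<0$ when $\kappa>-1$) together with the principle of symmetric criticality makes it a solution of \eqref{equ:symmetric}. Finally, replacing $(u_0,v_0)$ by $(|u_0|,|v_0|)$ does not raise the energy — $\kappa\le0$ gives $2\kappa\intrn|u_0||v_0|\le2\kappa\intrn u_0v_0$ while the quartic integrals are unchanged, so the required Nehari rescaling has factor $\le1$ — whence the minimizer may be taken nonnegative; since $(u_0,v_0)\in\M$ forces $u_0\not\equiv0\not\equiv v_0$, the strong maximum principle then gives $u_0>0$, $v_0>0$.

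For (ii), if $u_0\equiv v_0=:\phi>0$, then $\phi$ solves $-\Delta\phi+(1+\kappa)\phi=(1+\beta)\phi^3$, and testing against $\phi$ yields $\intrn\bigl(|\nabla\phi|^2+(1+\kappa)\phi^2\bigr)=(1+\beta)\intrn\phi^4\le0$, which is impossible since $1+\kappa>0$ and $\phi\not\equiv0$. Hence $u_0\not\equiv v_0$, so by the interchange symmetry of \eqref{equ:symmetric} the pair $(v_0,u_0)$ is a positive solution different from $(u_0,v_0)$, and \eqref{equ:symmetric} has at least two positive solutions, as claimed.

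The main obstacle will be the compactness/positivity part of (i) when $\beta\le-1$. In contrast to the $\beta>0$ situation of Lemma \ref{thm:existence-asym}, Schwarz symmetrization is unavailable: it increases $\intrn u^2v^2$, which for $\beta<0$ both breaks the Nehari rescaling bound and raises the relevant energy, so one is forced to minimize inside $\h_r$ from the outset and recover a genuine solution via symmetric criticality. For the same reason — $\intrn(u^4+v^4+2\beta u^2v^2)$ is no longer sign-definite — one must verify by hand that $\M_r\ne\emptyset$ and that minimizing sequences do not degenerate (neither component of the weak limit vanishes, so that the limit, after a rescaling that turns out to be trivial, lies in $\M_r$); pinning down these two points carefully is the heart of the argument.
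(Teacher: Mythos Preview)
Your route is quite different from the paper's. For the range $-1<\kappa\le0$, $\beta\le-1$ the paper does not minimize on a Nehari set but instead runs a bifurcation argument in the parameter $\beta$ along the synchronized branch $\tw^+|_\kappa$: for each fixed $\kappa\in(-1,0]$ one obtains a sequence of global bifurcation branches in $\R\times\h_r$ emanating from $\tw^+|_\kappa$ (the analysis being that of \cite{Bartsch-Dancer-Wang:2010} carried over with the harmless extra linear term), and each branch is unbounded toward $\beta\to-\infty$, hence meets every level $\beta\le-1$ and supplies positive solutions there.

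Your variational scheme has a genuine obstruction at the endpoint $\kappa=0$. When $\kappa=0$ and $\beta<0$ the value $\inf_{\M_r}I_{0,\beta}$ equals $\tfrac14\|\omega\|^2$ and is \emph{not} attained on $\M_r$. Indeed, writing $a=\|u\|^2$, $b=\|v\|^2$ and using the sharp inequality $\int u^4\le\|u\|^4/\|\omega\|^2$, the Nehari identity together with $\beta\le0$ gives $a+b\le\int(u^4+v^4)\le(a^2+b^2)/\|\omega\|^2$, so that $a+b>\|\omega\|^2$ whenever $ab>0$; on the other hand test pairs $(\omega,\epsilon\psi)$ with radial $\psi$ supported far from the origin push the infimum down to $\tfrac14\|\omega\|^2$. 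Thus every minimizing sequence degenerates to a semi-trivial configuration and step~(i) collapses. For $\kappa\in(-1,0)$ the scheme can in principle be rescued --- the pair $(t\omega,t\epsilon\omega)$ drops the infimum strictly below $\tfrac14\|\omega\|^2$ through the first-order contribution $\kappa\epsilon|\omega|_2^2<0$, which then rules out semi-trivial limits by energy comparison --- but this is exactly the missing ingredient you yourself flag as ``the heart of the argument'', and it does not survive the passage to $\kappa=0$. The bifurcation approach sidesteps the difficulty entirely.
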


\begin{proof} If $\kappa>-1$ and $\beta>-1$, then solution branch $\tw^+$
exists, thus system \eqref{equ:symmetric} has at least one positive
solution.

If $-1<\kappa\leq0$ and $\beta\leq-1$, positive solution are found by using bifurcation method with respect to $\tw^+|_\kappa$ in parameter $\beta$.
 Actually, for each $-1<\kappa\leq0$ fixed, there is a sequence of global bifurcations in radial space $\h_r$. Moreover, these global bifurcation branches are
 unbounded in the negative direction of $\beta$, i.e., the projection of each global bifurcation branch cover the interval $\beta\in(-\infty, -1]$.
 The proofs are analogous to \cite{Bartsch-Dancer-Wang:2010}, so the details are omitted. 
\end{proof}

\begin{remark}
 Due to the presence of parameter $\kappa$, there is no synchronized solution branch if $\mu_1\neq\mu_2$. Thus,
 the proof of Lemma \ref{thm:existence-symm} does not apply to system \eqref{equ:twoparameters} in general.
\end{remark}

\bigskip
\noindent\emph{Proof of Theorem \ref{thm:existence}}~~ It follows
directly from Lemma \ref{thm:nonexistence}, \ref{thm:existence-asym}
and \ref{thm:existence-symm}.\hfill $\square$
\bigskip

Picture~\ref{pic:betabifur} shows the synchronized solution branches $\tw^+|_\kappa$ for three different values of $\kappa$. The bifurcation points on $\tw^+|_\kappa$ coincide with the local bifurcation phenomena demonstrated in \cite{Bartsch-Dancer-Wang:2010}. We omit the global bifurcation branches, which are unbounded in the negative direction of $\beta$, to keep the picture clean.
\begin{figure}[!ht]
 \centering
 \includegraphics[width=0.45\textwidth]{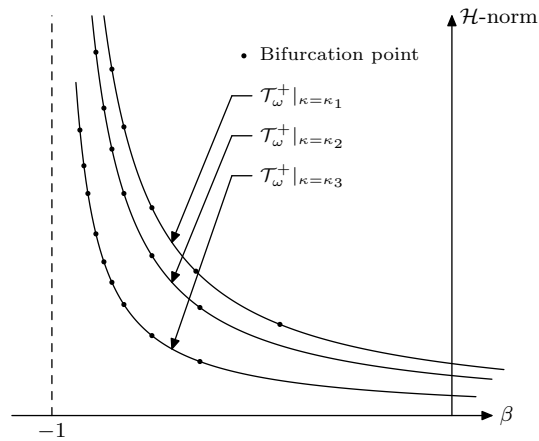}
 \caption{Schematic diagram of $\tw^+|_{\kappa=\kappa_i}$ with $-1<\kappa_3<\kappa_2<\kappa_1=0$.}
 \label{pic:betabifur}
\end{figure}



\section{Linearized system and possible parameter bifurcation points}
Since the bifurcation phenomena with parameter $\beta$ has been
essentially studied in \cite{Bartsch-Dancer-Wang:2010}, we shall
focus on the bifurcations with parameter $\kappa$ in the rest of
this paper. Also, we shall work in the radially symmetric space
$\h_r$ from now on. For simplicity, let
$\omega_\kappa(x):=\omega(\sqrt{1+\kappa}x)$.

Linearize system \eqref{equ:symmetric} at $(\kappa,\beta,u,v)\in\tw^+$, we get 
   {\allowdisplaybreaks
    \begin{align}
    -\Delta\left(\begin{array}{c}
                  \phi\\
                  \psi
                 \end{array}
           \right)+\left(\begin{array}{cc}
                  1 & \kappa\\
                  \kappa & 1
                 \end{array}
           \right)\left(\begin{array}{c}
                  \phi\\
                  \psi
                 \end{array}
           \right)&=\left(\begin{array}{cc}
                  3u^2+\beta v^2 & 2\beta uv\\
                  2\beta uv & 3v^2+\beta u^2
                 \end{array}
           \right)\left(\begin{array}{c}
                  \phi\\
                  \psi
                 \end{array}
           \right)\label{equ:linearizedsystem}\\
           &=\frac{(1+\kappa)\omega_\kappa^2}{1+\beta}\left(\begin{array}{cc}
                  3+\beta  & 2\beta \\
                  2\beta & 3+\beta
                 \end{array}
           \right)\left(\begin{array}{c}
                  \phi\\
                  \psi
                 \end{array}
           \right),\notag
   \end{align}}
   where $(\phi, \psi)\in \h_r$. 
 Denote the coefficient matrices in \eqref{equ:linearizedsystem} by,
   \begin{equation*}
    D(\kappa)=\left(\begin{array}{cc}
                  1  & \kappa \\
                  \kappa & 1
                 \end{array}
           \right), \hspace{1cm}
    E(\beta)=\left(\begin{array}{cc}
                  3+\beta  & 2\beta \\
                  2\beta & 3+\beta
                 \end{array}
           \right).
   \end{equation*}
It is easy to see that $(1,1)^T$ and $(1, -1)^T$ are eigenvectors of $D(\kappa)$ and $E(\beta)$. Let
\begin{equation*}
           P=\left(\begin{array}{cc}
                  1/\sqrt{2} & 1/\sqrt{2} \\
                  1/\sqrt{2} & -1/\sqrt{2}
                 \end{array}
           \right),\hspace{0.6cm}\hbox{then}\hspace{0.6cm}
           P^{-1}=\left(\begin{array}{cc}
                  1/\sqrt{2} & 1/\sqrt{2} \\
                  1/\sqrt{2} & -1/\sqrt{2}
                 \end{array}
           \right),
   \end{equation*}
   and $D(\kappa), E(\beta)$ can be diagonalized as follows,
   \begin{equation*}
    PD(\kappa)P^{-1}=\left(\begin{array}{cc}
                  1+\kappa  & 0 \\
                  0 & 1-\kappa
                 \end{array}
           \right), \hspace{1cm}
    PE(\beta)P^{-1}=\left(\begin{array}{cc}
                  3(1+\beta)  & 0 \\
                  0 & 3-\beta
                 \end{array}
           \right).
   \end{equation*}
Thus \eqref{equ:linearizedsystem} is equivalent to
{\allowdisplaybreaks
    \begin{align*}
    -\Delta P\left(\begin{array}{c}
                  \phi\\
                  \psi
                 \end{array}
           \right)+\left(\begin{array}{cc}
                  1+\kappa & 0\\
                  0 & 1-\kappa
                 \end{array}
           \right)P\left(\begin{array}{c}
                  \phi\\
                  \psi
                 \end{array}
           \right)&=\frac{(1+\kappa)\omega_\kappa^2}{1+\beta}\left(\begin{array}{cc}
                  3(1+\beta)  & 0 \\
                  0 & 3-\beta
                 \end{array}
           \right)P\left(\begin{array}{c}
                  \phi\\
                  \psi
                 \end{array}
           \right),
   \end{align*}}which can be rewritten as
\begin{equation}\label{equ:linearizeRearranged}
 \left\{\begin{array}{l}
         -\Delta(\phi+\psi)+(1+\kappa)(\phi+\psi)=3(1+\kappa)\omega_\kappa^2(\phi+\psi),\\
         -\Delta(\phi-\psi)+(1-\kappa)(\phi-\psi)=\frac{(1+\kappa)(3-\beta)}{1+\beta}\omega_\kappa^2(\phi-\psi).
        \end{array}
 \right.
\end{equation}
 The non-degeneracy of $\omega$ implies the non-degeneracy of
 $\omega_\kappa$ as a solution of
 $$-\Delta\omega_\kappa +(1+\kappa)\omega_\kappa=(1+\kappa)\omega_k^3,$$
 and the first equation of \eqref{equ:linearizeRearranged} implies that $\phi+\psi=0$. Therefore, the nontrivial solution of \eqref{equ:linearizedsystem} must be in the form $(\phi, -\phi)$. Substitute this possible solution form in the second equation of \eqref{equ:linearizeRearranged}, the linearized system can be reduced to
   \begin{equation*}
    -\Delta\phi+(1-\kappa)\phi=\frac{(1+\kappa)(3-\beta)}{1+\beta}\omega_\kappa^2\phi.
   \end{equation*}
After change variable to $y=\sqrt{1+\kappa}x$ (we still use $\phi$ to denote the unknown function for convenience), the above equation becomes
   \begin{equation}\label{equ:linearizedReduced}
    -\Delta\phi+\frac{1-\kappa}{1+\kappa}\phi=\frac{3-\beta}{1+\beta}\omega^2\phi, \hspace{1cm} \phi\in H^1_r(\R^N).
   \end{equation}
Clearly, the nonzero solutions of \eqref{equ:linearizedReduced}
determine eigenvectors of \eqref{equ:linearizedsystem} and these
eigenvectors take the form $(\phi, -\phi)$.

\subsection{An eigenvalue problem}
To find nontrivial solution of \eqref{equ:linearizedReduced}, we investigate the following
eigenvalue problem,
\begin{equation}\label{equ:eigenvalue-kappa}
 -\Delta\phi+\frac{1-\kappa}{1+\kappa}\phi=\lambda_j(\kappa)\omega^2\phi,\hspace{1cm} \phi\in H^1_r(\R^N).
\end{equation}
Here we denote the eigenvalue by $\lambda_j(\kappa)$ to indicate the dependency on $\kappa$. Let $C(\kappa)=(1-\kappa)/(1+\kappa)$, then $C(\kappa)\rightarrow\infty$ as $\kappa\rightarrow-1^+$ and $C(\kappa)\rightarrow-1$ as $\kappa\rightarrow\infty$. In addition, $C(\kappa)$ is decreasing on the interval $(-1,\infty)$. Denote $\lambda_j:=\lambda_j(0)$. Since $\omega>0$ and $\omega\in L^\infty$, it is well known that
\begin{equation*}
 0<\lambda_1<\lambda_2<...<\lambda_j<...,\ \hbox{and}\ \lambda_j\rightarrow\infty\ \ \hbox{as}\ \ j\rightarrow\infty.
\end{equation*}
As we will see in next lemma, $\lambda_j(\kappa)$ is a decreasing
and continuous function of $\kappa$. 

\begin{lemma}\label{lemma:estimateLambdaKappa}
 For any $j\geq1$, $\lambda_j(\kappa)$ is a continuous and decreasing function of $\kappa$. Moreover, there exists $\kappa_i\in(-1,0]$ such that $\lambda_i(\kappa_i)=\lambda_j$, for every $1\leq i\leq j$.
\end{lemma}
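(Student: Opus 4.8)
The plan is to analyze the eigenvalue problem \eqref{equ:eigenvalue-kappa} as a perturbation (in the coefficient $C(\kappa) = (1-\kappa)/(1+\kappa)$) of the baseline problem at $\kappa = 0$, and to exploit monotonicity and continuity of $C(\kappa)$ together with min-max characterizations of the eigenvalues. First I would fix the variational setup: since $\omega \in L^\infty(\R^N)$ and $\omega > 0$, the bilinear form $a_\kappa(\phi,\psi) := \intrn (\nabla\phi\nabla\psi + C(\kappa)\phi\psi)$ is, for $\kappa > -1$, equivalent to the $H^1_r$ inner product (the constant $C(\kappa)$ is positive when $\kappa < 1$ and bounded below by $-1 < $ the bottom of the spectrum of $-\Delta$ contribution only in a relative sense — more precisely one works with $-\Delta\phi + \phi = \lambda \omega^2 \phi$ plus a shift, so that compactness of $H^1_r \hookrightarrow L^2(\omega^2 dx)$ via $H^1_r \hookrightarrow L^4$ and $\omega^2 \in L^2$ holds). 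Then $\lambda_j(\kappa)$ is given by the Courant–Fischer formula
\begin{equation*}
 \lambda_j(\kappa) = \min_{\substack{V \subset H^1_r \\ \dim V = j}} \ \max_{\phi \in V \setminus\{0\}} \ \frac{\intrn(|\nabla\phi|^2 + C(\kappa)|\phi|^2)}{\intrn \omega^2 \phi^2}.
\end{equation*}

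For monotonicity: if $\kappa < \kappa'$ then $C(\kappa) > C(\kappa')$ (since $C$ is decreasing on $(-1,\infty)$, as already noted in the text), so for every $\phi$ the Rayleigh quotient at $\kappa$ dominates that at $\kappa'$; taking min-max over $j$-dimensional subspaces preserves the inequality, giving $\lambda_j(\kappa) \geq \lambda_j(\kappa')$. Strict monotonicity follows because an eigenfunction cannot vanish identically. For continuity: since $C(\kappa)$ is continuous, for $\kappa$ near $\kappa_0$ one has $|C(\kappa) - C(\kappa_0)| \leq \varepsilon$, hence the numerators of the two Rayleigh quotients differ by at most $\varepsilon \intrn |\phi|^2 \leq \varepsilon\, C_0 \intrn \omega^2\phi^2$ on any subspace where the quotient is already controlled (using $\omega^2$ bounded below on compact sets and a cut-off / the fact that the relevant minimizing subspaces are spanned by eigenfunctions which decay); this yields $|\lambda_j(\kappa) - \lambda_j(\kappa_0)| \leq \varepsilon\, \lambda_j(\kappa_0) + o(1)$, so $\lambda_j$ is continuous in $\kappa$.

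Finally, to produce $\kappa_i \in (-1,0]$ with $\lambda_i(\kappa_i) = \lambda_j$ for each $1 \le i \le j$: observe that $C(0) = 1$, so $\lambda_i(0) = \lambda_i \leq \lambda_j$ (using the ordering $\lambda_1 < \lambda_2 < \cdots$). As $\kappa \to -1^+$, $C(\kappa) \to +\infty$, and by the min-max formula $\lambda_i(\kappa) \geq C(\kappa)/\|\omega\|_\infty^2 \to +\infty$, so $\lambda_i(\kappa) > \lambda_j$ for $\kappa$ sufficiently close to $-1$. Since $\lambda_i(\cdot)$ is continuous and decreasing on $(-1,0]$, with $\lambda_i(0) \le \lambda_j$ and $\lambda_i(\kappa) > \lambda_j$ near $-1$, the intermediate value theorem supplies a (unique, by strict monotonicity) $\kappa_i \in (-1,0)$ — or $\kappa_i = 0$ in the boundary case $i = j$ — with $\lambda_i(\kappa_i) = \lambda_j$. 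The step I expect to be the main technical obstacle is the continuity argument: one must be careful that the test-function-level estimate $|C(\kappa)-C(\kappa_0)|\intrn|\phi|^2 \lesssim \varepsilon \intrn \omega^2\phi^2$ does not hold uniformly over all of $H^1_r$ (since $\omega^2$ decays at infinity), so one genuinely needs to restrict to the finite-dimensional minimizing subspaces spanned by eigenfunctions of the $\kappa_0$-problem, on which $\intrn|\phi|^2$ and $\intrn\omega^2\phi^2$ are comparable, and then run the min-max comparison only over those; alternatively one invokes standard analytic perturbation theory (Kato) for the family of compact self-adjoint operators $\phi \mapsto (-\Delta + C(\kappa))^{-1}(\omega^2 \phi)$, whose norm-resolvent dependence on $\kappa$ is manifestly continuous (indeed real-analytic).
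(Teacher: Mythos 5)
Your proposal is correct and follows essentially the same route as the paper: the Courant--Fischer characterization of $\lambda_j(\kappa)$, monotonicity inherited from the monotonicity of $C(\kappa)$, the lower bound $\lambda_1(\kappa)\geq C(\kappa)/|\omega|_\infty^2\to\infty$ as $\kappa\to-1^+$, and the intermediate value theorem. You are in fact more careful than the paper on the continuity step: the paper bounds $\sup\int\phi^2/\int\omega^2\phi^2$ by $1/|\omega|_\infty^2$, which is the reverse inequality (that quotient is bounded \emph{below}, not above, by $1/|\omega|_\infty^2$, and is unbounded above since $\omega$ decays), and your fix --- restricting the comparison to the finite-dimensional optimal subspaces, on which $\int\phi^2$ and $\int\omega^2\phi^2$ are comparable, or invoking perturbation theory for the associated compact operators --- is exactly what is needed to make that step rigorous.
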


\begin{proof} Recall the variational characterization of $\lambda_j(\kappa)$,
\begin{equation*}
 \lambda_j(\kappa)=\sup_{E_{j-1}}\inf_{\phi\in E_{j-1}^\perp}J(\phi, \kappa),
\end{equation*}
where $E_j$ denotes a $j$-dimensional subspace of $H_r^1(\R^N)$, $E_j^\perp$ denotes the orthogonal space of $E_j$, and
\begin{equation}\label{equ:lambdaCharacterization}
 J(\phi, \kappa):=\frac{\displaystyle \int|\nabla\phi|^2+C(\kappa)\phi^2}{\displaystyle \int\omega^2\phi^2}.
\end{equation}
It is easy to see from \eqref{equ:lambdaCharacterization} that $J(\phi;\cdot)$ is a continuous and decreasing function of $\kappa$, which further implies the continuity and monotonicity of $\lambda_j(\kappa)$. 

Assume $-1<\kappa_1<\kappa_2$. Consider $\lambda_1(\kappa)$ first. On the one hand,
\begin{align*}
 \lambda_1(\kappa_1)-\lambda_1(\kappa_2)&=\inf_{\phi\in H^1_r(\R^N)} J(\phi, \kappa_1) -\inf_{\phi\in H^1_r(\R^N)} J(\phi, \kappa_2)\\
 &\geq\inf_{\phi\in H^1_r(\R^N)}(J(\phi, \kappa_1)-J(\phi, \kappa_2))\\
 &\geq0.
\end{align*}
On the other hand, let $\phi_2$ be a minimizer of $\inf_{\phi\in
H^1_r(\R^N)} J(\phi, \kappa_2)$, then
\begin{align*}
 \lambda_1(\kappa_1)-\lambda_1(\kappa_2)&=\inf_{\phi\in H^1_r(\R^N)} J(\phi, \kappa_1) -\inf_{\phi\in H^1_r(\R^N)} J(\phi, \kappa_2)\\
 &=\inf_{\phi\in H^1_r(\R^N)} J(\phi, \kappa_1)-J(\phi_2, \kappa_2)
 \leq J(\phi_2, \kappa_1)-J(\phi_2, \kappa_2)\\
 &=(C(\kappa_1)-C(\kappa_2))\frac{\int\phi_2^2}{\int\omega^2\phi_2^2}.
\end{align*}
Hence $\lambda_1$ is a continuous and decreasing function of $\kappa$, by using the continuity and monotonicity of $C(\kappa)$.

Similarly, for $\lambda_j(\kappa)$ with $j\geq2$,
{\allowdisplaybreaks
\begin{align*}
 \lambda_j(\kappa_1)-\lambda_j(\kappa_2)
 &=\sup_{E_{j-1}}\inf_{\phi\in E_{j-1}^\perp}J(\phi, \kappa_1)- \sup_{E_{j-1}}\inf_{\phi\in E_{j-1}^\perp}J(\phi, \kappa_2)\\
 &\leq\sup_{E_{j-1}}\left(\inf_{\phi\in E_{j-1}^\perp}J(\phi, \kappa_1)-\inf_{\phi\in E_{j-1}^\perp}J(\phi, \kappa_2)\right)\\
 &\leq(C(\kappa_1)-C(\kappa_2))\sup_{E_{j-1}} \frac{\int\phi_2^2}{\int\omega^2\phi_2^2}\\
 &\leq\frac{C(\kappa_1)-C(\kappa_2)}{|\omega|_\infty^2},
\end{align*}}
where $\phi_2$ is a minimizer of $J(\phi, \kappa_2)$ in $
E_{j-1}^\perp$ and the last step is due to the fact that
$0<\omega(x)\leq |\omega|_\infty$ for any $x\in\R^N$.

On the other hand, let $E^*_{j-1}$ be the $j-1$ dimensional space corresponding to the eigenfunction of $\lambda_j(\kappa_2)$, then
\begin{align*}
 \lambda_j(\kappa_1)-\lambda_j(\kappa_2)
 &\geq\inf_{\phi\in (E^*_{j-1})^\perp}J(\phi, \kappa_1)-\inf_{\phi\in (E^*_{j-1})^\perp}J(\phi, \kappa_2)\\
 &\geq\inf_{\phi\in (E^*_{j-1})^\perp}(J(\phi, \kappa_1)-J(\phi, \kappa_2))\\
 &\geq0.
\end{align*} Thus $\lambda_j$ is a continuous and decreasing function of $\kappa$ for any $j\geq2$.

By the monotonicity and continuity of $\lambda_j(\kappa)$, to find $\kappa_i\in(-1,0]$ such that $\lambda_i(\kappa_i)=\lambda_j$ for $1\leq i\leq j$, we only need to find a $\kappa\in(-1,0]$ such that $\lambda_1(\kappa)>\lambda_j$ for each $j\geq2$. By the $L^\infty$ boundedness of $\omega$,
 \begin{equation*}
  J(\phi, \kappa)\geq C(\kappa)\frac{\int\phi^2}{\int\omega^2\phi^2}\geq \frac{C(\kappa)}{|\omega|_\infty^2},
 \end{equation*}
which implies that $\lambda_1(\kappa)\geq C(\kappa)/|\omega|_\infty^2$. According to the monotonicity of $C(\kappa)$, we see that $\lambda_1(\kappa)\rightarrow\infty$ as $\kappa\rightarrow-1^+$.

The proof is completed. 
\end{proof}

\begin{remark}\label{remark:about-lambda-kappa}
 For any fixed $j$, Lemma \ref{lemma:estimateLambdaKappa} shows that the eigenvalue $\lambda_j(\kappa)$ can be greater than arbitrary given positive number,
  provided $\kappa$ is close enough to $-1$. On the other hand, as $\kappa\rightarrow\infty$, $\lambda_j(\kappa)$ is decreasing but with a lower bound.
  Therefore, it is not guaranteed to have $\lambda_j(\kappa)<\lambda_i$ for any $i<j$, no matter how large $\kappa$ is.
\end{remark}

\subsection{Local bifurcations}
By comparing \eqref{equ:linearizedReduced} and
\eqref{equ:eigenvalue-kappa}, the linearized system
\eqref{equ:linearizedsystem} has a nontrivial solution if $\kappa$
satisfying
\begin{equation}\label{equ:solvebifurParameters}
 f(\beta):=\frac{3-\beta}{1+\beta}=\lambda_j(\kappa)
\end{equation}
for some $j\geq1$. Note that $f$ is decreasing in $\beta$, and
\begin{equation*}
 f(\beta)\rightarrow\infty,\ \hbox{as}\ \beta\rightarrow-1^+;\ \
 f(\beta)\rightarrow-1,\ \hbox{as}\ \beta\rightarrow\infty.
\end{equation*}
The following lemma shows the existence of local bifurcations with respect to $\tw^+|_\beta$.

\begin{lemma}\label{thm:localbifurcation}
 For each fixed $\beta>-1$ such that $f(\beta)\geq\lambda_1$, there are finitely many bifurcation points of \eqref{equ:symmetric} with respect to $\tw^+|_\beta$. {Moreover, denote
 \begin{equation*}
  K(\beta)=\{\kappa_j~|~(\kappa_j, u_{\kappa_j}, v_{\kappa_j})~\hbox{is a bifurcation point of \eqref{equ:symmetric} with respect to}~ \tw^+|_\beta, j\geq1\},
 \end{equation*}   and  $\sharp K(\beta)$ the number of elements of
 $K(\beta)$, then
 $\sharp K(\beta)\rightarrow\infty$ as $\beta\rightarrow-1^+$.}
\end{lemma}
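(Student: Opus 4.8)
The plan is to convert the search for bifurcation points on $\tw^+|_\beta$ into counting the solutions $\kappa$ of the scalar equations $f(\beta)=\lambda_j(\kappa)$, and then to control this count through the monotonicity established in Lemma~\ref{lemma:estimateLambdaKappa}.

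\emph{Step 1: characterising bifurcation points.} With $\beta$ fixed, I would write \eqref{equ:symmetric} as $\mathcal F(\kappa,u,v)=0$, where $\mathcal F\colon(-1,\infty)\times\h_r\to\h_r$ is $C^2$ and of the form identity-minus-compact (invert $-\Delta+D(\kappa)$ and absorb the cubic and coupling terms into a compact operator). By the implicit function theorem, a point $(\kappa_0,u_{\kappa_0},v_{\kappa_0})\in\tw^+|_\beta$ can be a bifurcation point only if $D_{(u,v)}\mathcal F$ there is singular, i.e.\ only if the linearised system \eqref{equ:linearizedsystem} has a nontrivial solution in $\h_r$. By the reduction carried out after \eqref{equ:linearizedsystem} --- non-degeneracy of $\omega_\kappa$ forces $\phi+\psi\equiv0$, and the remaining equation is \eqref{equ:linearizedReduced} --- this happens precisely when \eqref{equ:solvebifurParameters} holds, that is $f(\beta)=\lambda_j(\kappa_0)$ for some $j\ge1$. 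Since on $H^1_r(\R^N)$ the weighted eigenvalue problem \eqref{equ:eigenvalue-kappa} reduces to a Sturm--Liouville problem in the radial variable, each $\lambda_j(\kappa)$ is a simple eigenvalue, so the kernel at such a point is one-dimensional, spanned by $(\phi_j,-\phi_j)$. Combined with the transversality $\frac{d}{d\kappa}\lambda_j(\kappa_0)\neq0$ coming from the strict monotonicity in Lemma~\ref{lemma:estimateLambdaKappa}, the Crandall--Rabinowitz theorem shows conversely that every such $\kappa_0$ is a genuine bifurcation point. Hence $K(\beta)=\{\kappa_0\in(-1,\infty)\ :\ f(\beta)=\lambda_j(\kappa_0)\text{ for some }j\ge1\}$.

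\emph{Step 2: counting.} Fix $j$. By Lemma~\ref{lemma:estimateLambdaKappa}, $\kappa\mapsto\lambda_j(\kappa)$ is continuous and strictly decreasing on $(-1,\infty)$, with $\lambda_j(\kappa)\ge\lambda_1(\kappa)\to+\infty$ as $\kappa\to-1^+$; put $L_j:=\lim_{\kappa\to+\infty}\lambda_j(\kappa)$. Then $f(\beta)=\lambda_j(\kappa)$ has a solution --- necessarily unique --- if and only if $f(\beta)>L_j$. Since $\lambda_1(\kappa)<\lambda_2(\kappa)<\cdots$ at every $\kappa$, different indices produce different solutions, so $\sharp K(\beta)=\sharp\{j\ge1\ :\ L_j<f(\beta)\}$. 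The hypothesis $f(\beta)\ge\lambda_1=\lambda_1(0)>L_1$ puts $j=1$ into this set (with $\kappa_1\in(-1,0]$, by the intermediate value theorem), so $K(\beta)\neq\emptyset$. Moreover the $L_j$ are non-decreasing in $j$ (since $\lambda_j(\kappa)>\lambda_{j-1}(\kappa)$ for all $\kappa$) and each is finite ($L_j\le\lambda_j(0)=\lambda_j<\infty$). Consequently, once it is known that $L_j\to\infty$, the set $\{j:L_j<f(\beta)\}$ is finite for every fixed $\beta$, giving $\sharp K(\beta)<\infty$; and since $f(\beta)=\frac{3-\beta}{1+\beta}\to+\infty$ as $\beta\to-1^+$ while $L_j\nearrow\infty$, this forces $\sharp K(\beta)\to\infty$ as $\beta\to-1^+$.

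\emph{Step 3: the crux, $L_j\to\infty$.} What remains is a lower bound on $\lambda_j(\kappa)$ that is uniform in $\kappa\in(-1,\infty)$ and tends to $\infty$ with $j$. For $\kappa\le0$ it is immediate from monotonicity: $\lambda_j(\kappa)\ge\lambda_j(0)=\lambda_j\to\infty$. So the work lies in the range $\kappa\ge0$, i.e.\ $C(\kappa)=\frac{1-\kappa}{1+\kappa}\le1$, and I would exploit the Sturm oscillation structure of the radial ODE behind \eqref{equ:eigenvalue-kappa}: the $j$-th radial eigenfunction has exactly $j-1$ nodes, while a Pr\"ufer/WKB comparison bounds the number of nodes above by a constant times $\sqrt{\lambda_j(\kappa)}$ times a weighted $L^1$-integral of $\omega$ over the classically allowed region --- finite and controllable uniformly in $\kappa$ because $\omega$ decays exponentially --- which forces $\lambda_j(\kappa)$ to grow (like $j^2$, up to logarithmic and dimensional factors). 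The delicate point, which I expect to be the main difficulty, is the regime $\kappa\to+\infty$: there $C(\kappa)\to-1$, the classically allowed region spreads over all of $\R^N$, and one must lean on the decay of $\omega^2$ itself --- rather than on any positivity of $C(\kappa)$ --- to keep the node count finite and the resulting lower bound uniform.
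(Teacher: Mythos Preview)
Your reduction in Step~1 matches the paper's, but from there the two routes diverge. The paper does not invoke Crandall--Rabinowitz; it works variationally, noting that the $\kappa$-derivative of the Hessian of $I_\kappa$, restricted to the kernel direction $(\phi,-\phi)$, equals $-2\int_{\R^N}\phi^2<0$. This makes the Morse index of $I_\kappa$ at $(u_\kappa,v_\kappa)\in\tw^+|_\beta$ jump each time \eqref{equ:solvebifurParameters} is satisfied, and Theorem~8.9 of Mawhin--Willem then produces the bifurcation. This is cleaner than your route: it sidesteps the question of whether ``$\lambda_j'(\kappa_0)\neq0$'' really encodes the CR transversality hypothesis (which, strictly, must also account for the $\kappa$-dependence of the base point $(u_\kappa,v_\kappa)$, not just of the coefficient $C(\kappa)$; and Lemma~\ref{lemma:estimateLambdaKappa} gives continuity and monotonicity, not differentiability).

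For the counting, the paper confines itself to $\kappa\in(-1,0]$ --- this is made explicit in the Remark immediately following the lemma. On that interval monotonicity gives $\lambda_j(\kappa)\ge\lambda_j(0)=\lambda_j$, so $f(\beta)=\lambda_j(\kappa)$ forces $\lambda_j\le f(\beta)$; this already singles out a finite index set $\{1,\dots,i_0\}$, and the second part of Lemma~\ref{lemma:estimateLambdaKappa} supplies exactly one $\kappa_j\in(-1,0]$ for each such $j$. The divergence $\sharp K(\beta)\to\infty$ then follows because $f(\beta)\to+\infty$ pushes the cutoff index $i_0$ (defined by $\lambda_{i_0}\le f(\beta)<\lambda_{i_0+1}$) to infinity. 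No limits $L_j$ enter at all.

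Your Step~3 is thus aimed at something the paper does not actually prove (finiteness over the full range $\kappa\in(-1,\infty)$), and it runs into a genuine obstacle you have not addressed. For $\kappa>1$ one has $C(\kappa)<0$, and then the Rayleigh quotient $J(\phi,\kappa)$ of \eqref{equ:lambdaCharacterization} is unbounded below: take radial $\phi$ supported in $\{R<|x|<2R\}$ so that $\int|\nabla\phi|^2/\int\phi^2\sim R^{-2}\to0$ while $\int\omega^2\phi^2$ is exponentially small in $R$; the numerator tends to $C(\kappa)\int\phi^2<0$ and $J\to-\infty$. The min--max defining $\lambda_j(\kappa)$ therefore collapses past $\kappa=1$, your $L_j=\lim_{\kappa\to\infty}\lambda_j(\kappa)$ is not well-posed as written, and any uniform node-count bound via Pr\"ufer/WKB would first require a reformulation of what ``$\lambda_j(\kappa)$'' means in this indefinite regime. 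If you only want the lemma as stated, drop Step~3 and restrict the count to $\kappa\in(-1,0]$ as the paper does.
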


\begin{proof} For fixed $\beta>-1$, we drop the subscript $\beta$ in \eqref{equ:energyfunctional} and write the energy functional as $I_\kappa$.
 Then the Hessian of $I_\kappa$ is
 \begin{equation*}
  H_\kappa[(\phi,\psi)]^2=\int_{\R^N}\bigg(|\nabla\phi|^2+|\nabla\psi|^2+(\phi^2+\psi^2)+2\kappa\phi\psi\bigg)dx + F(u, v, \phi, \psi, \beta),
 \end{equation*}where $F(u, v, \phi, \psi, \beta)$ denotes the rest terms that do not depend on $\kappa$. The derivative of $H_\kappa$ in $\kappa$, restricted to the kernel space of linearized system \eqref{equ:linearizedsystem} at $(\kappa_j, u_j, v_j)$ is
 \begin{equation*}
  H_\kappa'[(\phi,-\phi)]^2=-2\int_{\R^N}\phi^2dx<0,
 \end{equation*}
where $\phi\in V_j$ and $V_j$ denotes the $j$-th eigenspace of
\eqref{equ:linearizedReduced}. So the Morse index of $I_{\kappa}$ at
$(\kappa, u_\kappa, v_\kappa)$ is strictly increasing in $\kappa$.
In particular, it changes when $\kappa$ passes a value $\kappa_j$
that solves \eqref{equ:solvebifurParameters}. According to
\cite[Theorem 8.9]{Mawhin-Willem:1989}, this $\kappa_j$ is indeed a
parameter bifurcation point.

For fixed $\beta_0\in(-1,\infty)$ such that
$f(\beta_0)\geq\lambda_1$, the monotonicity of $f$ implies that
there exists $i_0$ such that $\lambda_{i_0}\leq
f(\beta_0)<\lambda_{i_0+1}$. Then by Lemma
\ref{lemma:estimateLambdaKappa} and Remark
\ref{remark:about-lambda-kappa}, there exists $\kappa_i\in(-1,0]$
for every $i\leq i_0$ such that $\lambda_i(\kappa_i)=f(\beta_0)$.
Thus there are $i_0$ possible parameter bifurcation points
determined through \eqref{equ:solvebifurParameters}, and
$i_0\rightarrow\infty$ as $\beta_0\rightarrow-1^+$. Therefore,
\begin{center}
 $\sharp K(\beta)<\infty$ for $\beta$ fixed, and $\sharp K(\beta)\rightarrow\infty$ as $\beta\rightarrow-1^+$,
\end{center}
where the monotonicity of $f$ is used.
\end{proof}

\begin{remark}
 In the proof of Lemma \ref{thm:localbifurcation}, we only consider $\kappa\in(-1,0]$. Actually, according to Lemma \ref{lemma:estimateLambdaKappa}, we may also find bifurcation parameters for $\kappa\in(0,\infty)$. More precisely, suppose $f(\beta_0)<\lambda_{j_0}$ for certain $j_0\geq1$ and
 \begin{description}
  \item[(*)] there exist $\kappa_{j_0}>0$ such that $\lambda_{j_0}(\kappa_{j_0})=f(\beta_0)$,
 \end{description}
 then using the same arguments as Lemma \ref{thm:localbifurcation}, $\lambda_{j_0}(\kappa_{j_0})$ is also a parameter bifurcation point. But as it is explained in Remark \ref{remark:about-lambda-kappa}, condition (*) is not always satisfied.
\end{remark}
\begin{remark}
 In radially symmetric space $\h_r$, every eigenvalue of \eqref{equ:eigenvalue-kappa} has multiplicity one.
  Then according to Rabinowitz's global bifurcation theory, these local bifurcations in Lemma \ref{thm:localbifurcation}
  actually give rise to global bifurcations, i.e., there is a continuous solution branch in $\R\times\h_r$ emanating from each bifurcation point on $\tw^+|_\beta$.
\end{remark}
\begin{remark}
 For any fixed $-1<\beta<0$, the synchronized solution branch $\tw^+|_\beta$ approaches $(0,0)$ as $\kappa\rightarrow-1^+$, thus $(-1,-1,0,0)$ is a bifurcation solution of \eqref{equ:symmetric} in product space $\R\times\R\times\h_r$ with two dimensional bifurcation parameter $(\kappa, \beta)\in\R\times\R$.
\end{remark}
\par

Now we give more descriptions for dependence of the bifurcations with respect to $\tw^+|_{\beta}$ on $\beta$.

\begin{lemma}\label{lem:bifurcationEnergy}
 Let $\kappa(\beta)=\min K(\beta)$ and $(\kappa(\beta), u_{\kappa(\beta)}, v_{\kappa(\beta)})\in\tw^+|_\beta$. Then
 \begin{equation*}
  \kappa(\beta)\rightarrow-1^+,\ \ \
  \|u_{\kappa(\beta)}\|_{L^2}=\|v_{\kappa(\beta)}\|_{L^2}\rightarrow\infty,\ \ \
  \hbox{as}\ \beta\rightarrow-1^+.
 \end{equation*}
\end{lemma}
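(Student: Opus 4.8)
The plan is to first pin down $\kappa(\beta)=\min K(\beta)$ exactly and then read off both assertions from the scaling structure of $\tw^+$. By \eqref{equ:solvebifurParameters} every element of $K(\beta)$ is a solution $\kappa\in(-1,\infty)$ of $\lambda_j(\kappa)=f(\beta)$ for some $j\ge1$, where $f(\beta)=(3-\beta)/(1+\beta)$. Since $f(\beta)\to+\infty$ as $\beta\to-1^+$, for such $\beta$ we have $f(\beta)>\lambda_1=\lambda_1(0)$, so using that $\lambda_1(\cdot)$ is continuous, strictly decreasing and blows up as $\kappa\to-1^+$ (Lemma~\ref{lemma:estimateLambdaKappa}) the intermediate value theorem produces a unique $\kappa_1(\beta)\in(-1,0]$ with $\lambda_1(\kappa_1(\beta))=f(\beta)$; it belongs to $K(\beta)$. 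I would then check $\kappa_1(\beta)=\min K(\beta)$: if some $\kappa\in K(\beta)$ were a root of $\lambda_j(\kappa)=f(\beta)$ with $j\ge2$, then the simplicity and strict ordering of the eigenvalues of \eqref{equ:eigenvalue-kappa} in $\h_r$ give $\lambda_1(\kappa)<\lambda_j(\kappa)=f(\beta)=\lambda_1(\kappa_1(\beta))$, and the monotonicity of $\lambda_1$ forces $\kappa>\kappa_1(\beta)$. Hence $\kappa(\beta)=\kappa_1(\beta)$ and, crucially, $\lambda_1(\kappa(\beta))=f(\beta)$.

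Granting this identity, $\kappa(\beta)\to-1^+$ is immediate: $\lambda_1(\cdot)$ is finite and decreasing on $(-1,\infty)$, so it is bounded on any $[\kappa_0,\infty)$ with $\kappa_0>-1$; if $\kappa(\beta)$ stayed $\ge\kappa_0$ along a sequence $\beta\to-1^+$, then $f(\beta)=\lambda_1(\kappa(\beta))\le\lambda_1(\kappa_0)<\infty$, contradicting $f(\beta)\to\infty$. For the $L^2$ statement I would use the explicit form of $\tw^+|_\beta$, on which the two components coincide: $u_\kappa=v_\kappa=\sqrt{(1+\kappa)/(1+\beta)}\;\omega(\sqrt{1+\kappa}\,x)$. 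In particular $\|u_\kappa\|_{L^2}=\|v_\kappa\|_{L^2}$, and the substitution $y=\sqrt{1+\kappa}\,x$ gives
\[
 \|u_\kappa\|_{L^2}^2=\frac{1+\kappa}{1+\beta}\int_{\R^N}\omega(\sqrt{1+\kappa}\,x)^2\,dx=\frac{(1+\kappa)^{\,1-N/2}}{1+\beta}\,|\omega|_2^2 .
\]
Evaluating at $\kappa=\kappa(\beta)$ and letting $\beta\to-1^+$: for $N=2$ the exponent $1-N/2$ vanishes, so the right side is $|\omega|_2^2/(1+\beta)\to\infty$; for $N=3$ it is $(1+\kappa(\beta))^{-1/2}(1+\beta)^{-1}|\omega|_2^2$, and since both $1+\kappa(\beta)\to0^+$ and $1+\beta\to0^+$ this diverges as well. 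In either case $\|u_{\kappa(\beta)}\|_{L^2}=\|v_{\kappa(\beta)}\|_{L^2}\to\infty$.

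Everything after the first paragraph is elementary, so the main (and only mildly delicate) obstacle is the identification $\kappa(\beta)=\kappa_1(\beta)$: one must be sure that the smallest bifurcation value is produced by the principal eigenvalue $\lambda_1$ rather than by some higher $\lambda_j$. This is where I would lean on the combination of the simplicity of the spectrum of \eqref{equ:eigenvalue-kappa} in the radial space with the strict monotonicity of $\kappa\mapsto\lambda_j(\kappa)$ from Lemma~\ref{lemma:estimateLambdaKappa}; once that is settled the two limits follow from the blow-up $\lambda_1(\kappa)\to\infty$ and the explicit scaling of $\omega$, with no further estimates needed.
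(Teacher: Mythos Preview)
Your argument is correct. The first assertion, $\kappa(\beta)\to-1^+$, is proved essentially the same way as in the paper: both argue by contradiction, using that $\lambda_1$ is bounded on any interval $[\kappa_0,\infty)$ while $f(\beta)\to\infty$. You are in fact more careful than the paper on one point: the paper simply asserts $f(\beta)=\lambda_1(\kappa(\beta))$, while you explicitly verify that $\min K(\beta)$ is produced by $\lambda_1$ rather than some higher $\lambda_j$.

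For the $L^2$ blow-up your route is genuinely different and more elementary. The paper first establishes the quantitative eigenvalue bound $\lambda_1(\kappa)\ge-2\kappa/\big(|\omega|_\infty^2(1+\kappa)\big)$ by estimating the Rayleigh quotient, combines it with $f(\beta)=\lambda_1(\kappa(\beta))$ to obtain $(1+\kappa(\beta))/(1+\beta)\ge-2\kappa(\beta)/\big(|\omega|_\infty^2(3-\beta)\big)$, and then uses this together with $\omega(\sqrt{1+\kappa(\beta)}\,x)\to|\omega|_\infty$ to conclude that $u_{\kappa(\beta)}(x)$ is pointwise bounded below by a positive constant, forcing the $L^2$ norm to diverge. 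You instead compute $\|u_{\kappa(\beta)}\|_{L^2}^2=(1+\kappa(\beta))^{1-N/2}|\omega|_2^2/(1+\beta)$ directly from the scaling of $\omega$ and read off the divergence from the factor $(1+\beta)^{-1}$ (aided by $(1+\kappa(\beta))^{-1/2}$ when $N=3$). This bypasses the eigenvalue estimate entirely. What the paper's longer argument buys is the inequality \eqref{inequ:kappabeta}, which is invoked again in Corollary~\ref{coro:bifurcationEnergy}; for the lemma itself, however, your scaling computation is cleaner and suffices.
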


\begin{proof} Assume for contradiction that $\kappa(\beta)\nrightarrow-1$ as
$\beta\rightarrow-1^+$, then there exists $\kappa_0>-1$  such that
$\kappa(\beta)\geq\kappa_0$. Using the monotonicity of
$\lambda_1(\kappa)$ in $\kappa$, we have
$\lambda_1(\kappa(\beta))\leq\lambda_{\kappa_0}$. On the other hand,
$f(\beta)\rightarrow\infty$ as $\beta\rightarrow-1^+$. In
particular, for $f(\beta)>\lambda_{\kappa_0}$ for $\beta$ close to
$-1$. But $f(\beta)=\lambda_1(\kappa(\beta))$, a contradiction. Thus
$$\lim_{\beta\rightarrow-1^+}\kappa(\beta)=-1.$$

Next, we estimate the $L^2$ norm of bifurcation solutions. Recall the definition of $\lambda_1(\kappa)$,
\begin{equation*}
 \lambda_1(\kappa)=\inf_{\phi\in H^1_r(\R^N)}J(\phi;\kappa)=\inf_{\psi\in H^1_r(\R^N)} \frac{C(\kappa)\int(|\nabla\psi|^2+\psi^2)}{\int(\omega_\kappa^*)^2\psi^2},
\end{equation*}
where $\omega_\kappa^*(x)=\omega(x/\sqrt{C(\kappa)})$, $\phi(x)=\psi(x/\sqrt{C(\kappa)})$. From equation \eqref{equ:scalar_omega}, we derive the equation,
\begin{equation*}
-C(\kappa)\Delta\omega_\kappa^*+\omega_\kappa^* =(\omega_\kappa^*)^3.
\end{equation*} Clearly, $\omega_\kappa^*$ is the unique positive radial ground state solution of this equation. Now
{\allowdisplaybreaks
\begin{align*}
 \lambda_1(\kappa)&=\inf_{H^1_r(\R^N)} \left(\frac{\int C(\kappa)|\nabla\psi|^2+\psi^2}{\int(\omega_\kappa^*)^2\psi^2} +\frac{(C(\kappa)-1)\int\psi^2}{\int(\omega_\kappa^*)^2\psi^2}\right)\\
 &\geq\inf_{H^1_r(\R^N)}\frac{\int C(\kappa)|\nabla\psi|^2+\psi^2}{\int(\omega_\kappa^*)^2\psi^2}+\inf_{H^1_r(\R^N)}\frac{(C(\kappa)-1)\int\psi^2}{\int(\omega_\kappa^*)^2\psi^2}\\
 &=\frac{\int C(\kappa)|\nabla\omega_\kappa^*|^2+(\omega_\kappa^*)^2}{\int(\omega_\kappa^*)^4}+\inf_{H^1_r(\R^N)}\frac{(C(\kappa)-1)\int\psi^2}{\int(\omega_\kappa^*)^2\psi^2}\\
 &\geq1+\frac{C(\kappa)-1}{|\omega|_\infty^2}\geq\frac{-2\kappa}{|\omega|_\infty^2(1+\kappa)},
\end{align*}}
where the fact $0<\omega^*_\kappa(x)\leq |\omega|_\infty$ for $x\in\R^N$ is used. Since $f(\beta)=\lambda_1(\kappa(\beta))$,
the above inequality yields $\frac{3-\beta}{1+\beta}=\lambda_1(\kappa(\beta))\geq\frac{-2\kappa(\beta)}{|\omega|_\infty^2(1+\kappa(\beta))}$. Therefore,
\begin{equation}\label{inequ:kappabeta}
 \frac{1+\kappa(\beta)}{1+\beta}\geq\frac{-2\kappa(\beta)}{|\omega|_\infty^2(3-\beta)},
\end{equation}
Let $(\kappa(\beta), u_{\kappa(\beta)}, v_{\kappa(\beta)})\in \tw^+|_\beta$ be the bifurcation point corresponding to $\lambda_1(\kappa(\beta))$, thus
\begin{equation*}
 u_{\kappa(\beta)}(x)=v_{\kappa(\beta)}(x) =\sqrt{\frac{1+\kappa(\beta)}{1+\beta}}\omega(\sqrt{1+\kappa(\beta)}x).
\end{equation*}
{\allowdisplaybreaks
For fixed $x\in\R^N$, $\omega(\sqrt{1+\kappa(\beta)}x)\rightarrow\omega(0) =|\omega|_\infty$ as $\beta\rightarrow-1^+$. Combing with \eqref{inequ:kappabeta}, we have
\begin{align*}
 \lim_{\beta\rightarrow-1^+}u_{\kappa(\beta)}(x)=
 \lim_{\beta\rightarrow-1^+}v_{\kappa(\beta)}(x)&=
 \lim_{\beta\rightarrow-1^+}\sqrt{\frac{1+\kappa(\beta)}{1+\beta}}\omega(\sqrt{1+\kappa(\beta)}x)\\
 &\geq\lim_{\beta\rightarrow-1^+}\sqrt{\frac{-2\kappa(\beta)}{3-\beta}}\frac{\omega(\sqrt{1+\kappa(\beta)}x)}{|\omega|_\infty}\\
 &=\frac{\sqrt{2}}{2},
\end{align*} for any $x\in\R^N$ fixed. Here we use the fact that $\omega$ is decreasing in radial direction and its maximum is achieved at the origin. The pointwise convergence implies
\begin{equation*}
  \|u_{\kappa(\beta)}\|_{L^2}=\|v_{\kappa(\beta)}\|_{L^2}\rightarrow\infty,\ \ \
  \hbox{as}\ \beta\rightarrow-1^+.
 \end{equation*}
 The proof is completed. }
 \end{proof}

 Picture \ref{pic:kappabifur} shows the local bifurcations
with respect to $\tw^+|_{\beta=\beta_i}$ ($i=1,2,3$) and the
dependence of local bifurcations on $\beta$ , where $\kappa$ is the
bifurcation parameter and $\beta_1>\beta_2>\beta_3>-1$.

\begin{figure}[!ht]
 \centering
 \includegraphics[width=0.45\textwidth]{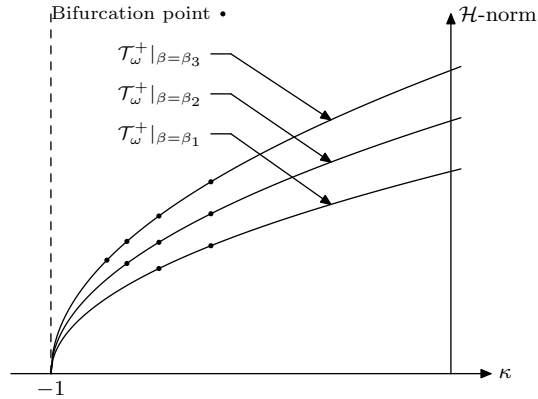}
 \caption{\footnotesize Schematic diagrams of local bifurcations on $\tw^+|_{\beta_j}$ for $\beta_1>\beta_2>\beta_3>-1$.}
 \label{pic:kappabifur}
\end{figure}

\bigskip

Denote by $(\kappa_j(\beta), u_{\kappa_j(\beta)}, v_{\kappa_j(\beta)}) \in\tw^+|_\beta$ the bifurcation point corresponding to
 \begin{equation*}
  f(\beta)=\lambda_j(\kappa_j(\beta)).
 \end{equation*}
By the monotonicity of $\lambda_j(\kappa)$, we have
\begin{equation*}
 -1<\kappa_1(\beta)<\kappa_2(\beta)<\cdots.
\end{equation*}
Based on these inequalities, \eqref{inequ:kappabeta} and the first part of proof of Lemma \ref{lem:bifurcationEnergy}, one can easily get the following corollary.

\begin{corollary}\label{coro:bifurcationEnergy}
 Denote $(\kappa_j(\beta), u_{\kappa_j(\beta)}, v_{\kappa_j(\beta)})\in\tw^+|_\beta$ the bifurcation point corresponding to
 \begin{equation*}
  f(\beta)=\lambda_j(\kappa_j(\beta)).
 \end{equation*}Then for each $j$, we have
 \begin{equation*}
  \kappa_j(\beta)\rightarrow-1,\ \ \|u_{\kappa_j(\beta)}\|_{L^2}= \|v_{\kappa_j(\beta)}\|_{L^2}\rightarrow\infty,
 \end{equation*} as $\beta\rightarrow-1^+$.
\end{corollary}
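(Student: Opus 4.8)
The plan is to read Corollary~\ref{coro:bifurcationEnergy} as a routine generalization of Lemma~\ref{lem:bifurcationEnergy}, where the role of $\lambda_1$ is now played by $\lambda_j$ for fixed $j\geq1$. Recall from Lemma~\ref{lemma:estimateLambdaKappa} that each $\lambda_j(\kappa)$ is continuous and strictly decreasing in $\kappa$ on $(-1,\infty)$, and that $\lambda_1(\kappa)\to\infty$ as $\kappa\to-1^+$; since $\lambda_1(\kappa)\le\lambda_2(\kappa)\le\cdots$ cannot hold (the ordering at $\kappa=0$ is $\lambda_1<\lambda_2<\cdots$, but monotonicity only gives us the behaviour in $\kappa$), the cleanest route is to bound $\lambda_j(\kappa)$ from below by $\lambda_1(\kappa)$. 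This is immediate from the variational characterization in~\eqref{equ:lambdaCharacterization}, since $\lambda_1(\kappa)=\inf_\phi J(\phi,\kappa)$ and $\lambda_j(\kappa)=\sup_{E_{j-1}}\inf_{\phi\in E_{j-1}^\perp}J(\phi,\kappa)\ge\lambda_1(\kappa)$. Hence the estimate $\lambda_1(\kappa)\ge\frac{-2\kappa}{|\omega|_\infty^2(1+\kappa)}$ established inside the proof of Lemma~\ref{lem:bifurcationEnergy} upgrades automatically to $\lambda_j(\kappa_j(\beta))\ge\lambda_1(\kappa_j(\beta))\ge\frac{-2\kappa_j(\beta)}{|\omega|_\infty^2(1+\kappa_j(\beta))}$.

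First I would prove $\kappa_j(\beta)\to-1^+$. Arguing by contradiction as in the first part of the proof of Lemma~\ref{lem:bifurcationEnergy}: if $\kappa_j(\beta)\ge\kappa_0>-1$ along a sequence $\beta\to-1^+$, then by monotonicity of $\lambda_j(\cdot)$ we have $\lambda_j(\kappa_j(\beta))\le\lambda_j(\kappa_0)$, a finite bound; but $f(\beta)=\lambda_j(\kappa_j(\beta))$ and $f(\beta)\to\infty$ as $\beta\to-1^+$, a contradiction. (Alternatively, one simply invokes the monotonicity chain $-1<\kappa_1(\beta)<\kappa_2(\beta)<\cdots$ together with $\kappa_1(\beta)\to-1$ from Lemma~\ref{lem:bifurcationEnergy}: since $\kappa_j(\beta)>\kappa_1(\beta)$, but also $\kappa_j(\beta)$ is monotone in $\beta$ — actually the direct contradiction argument is cleaner and self-contained, so I would present that.)

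Next, for the $L^2$ blow-up: having established $\lambda_j(\kappa_j(\beta))\ge\frac{-2\kappa_j(\beta)}{|\omega|_\infty^2(1+\kappa_j(\beta))}$ and recalling $f(\beta)=\lambda_j(\kappa_j(\beta))=\frac{3-\beta}{1+\beta}$, I rearrange exactly as in~\eqref{inequ:kappabeta} to obtain
\begin{equation*}
 \frac{1+\kappa_j(\beta)}{1+\beta}\ge\frac{-2\kappa_j(\beta)}{|\omega|_\infty^2(3-\beta)}.
\end{equation*}
Then, using the explicit form of the synchronized branch,
\begin{equation*}
 u_{\kappa_j(\beta)}(x)=v_{\kappa_j(\beta)}(x)=\sqrt{\frac{1+\kappa_j(\beta)}{1+\beta}}\,\omega(\sqrt{1+\kappa_j(\beta)}\,x),
\end{equation*}
and the facts that $\kappa_j(\beta)\to-1^+$, that $\omega(\sqrt{1+\kappa_j(\beta)}\,x)\to\omega(0)=|\omega|_\infty$ for each fixed $x$, and that $\omega$ is radially decreasing with maximum at the origin, I conclude $\liminf_{\beta\to-1^+}u_{\kappa_j(\beta)}(x)\ge\frac{\sqrt2}{2}$ pointwise, whence by Fatou's lemma $\|u_{\kappa_j(\beta)}\|_{L^2}=\|v_{\kappa_j(\beta)}\|_{L^2}\to\infty$. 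The only point requiring a moment's care — and the mild "obstacle" — is the inequality $\lambda_j(\kappa)\ge\lambda_1(\kappa)$ for all $j$: this must be read off the min-max formula uniformly in $\kappa$, but it is genuinely elementary since $\inf_{\phi\in E_{j-1}^\perp}J(\phi,\kappa)\ge\inf_{\phi\in H_r^1}J(\phi,\kappa)=\lambda_1(\kappa)$ for every $(j-1)$-dimensional subspace $E_{j-1}$, and taking the sup over $E_{j-1}$ preserves the bound. Everything else is a verbatim repetition of the two steps in the proof of Lemma~\ref{lem:bifurcationEnergy} with $j$ in place of $1$, which is precisely why the statement is phrased as a corollary.
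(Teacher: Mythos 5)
Your proposal is correct and follows essentially the same route the paper intends: the contradiction argument from the first part of the proof of Lemma~\ref{lem:bifurcationEnergy} applied verbatim with $\lambda_j$ in place of $\lambda_1$, followed by the analogue of \eqref{inequ:kappabeta} and the same pointwise lower bound for $u_{\kappa_j(\beta)}$. The only cosmetic difference is that you re-derive the key inequality via $\lambda_j(\kappa)\geq\lambda_1(\kappa)$ from the min--max characterization, whereas the paper transfers \eqref{inequ:kappabeta} using the ordering $\kappa_j(\beta)>\kappa_1(\beta)$; both are one-line observations and yield the same bound on $\frac{1+\kappa_j(\beta)}{1+\beta}$.
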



\section{Global bifurcations}
Since each eigenvalue $\lambda_j(\kappa)$ of \eqref{equ:eigenvalue-kappa} has multiplicity one in $\h_r$. By Rabinowitz's global bifurcation theorem \cite{Rabinowitz:1971}, there is a global bifurcation branch emanates from $\tw^+|_\beta$ at each bifurcation point. In this section, we shall discuss these global bifurcations.

In order to show the existence of positive bifurcation branches, we consider a modified system of \eqref{equ:symmetric},
\begin{equation}\label{equ:symmetric-cutoff}
  \left\{\begin{array}{l}
           -\Delta u + u + \kappa v = (u^+)^3 + \beta uv^2,\\
           -\Delta v + v + \kappa u = (v^+)^3 + \beta u^2v,\\
           u, v\in H_r^1(\R^N),
          \end{array}
   \right.
 \end{equation}
where $u^+=\max\{u,0\}, u^-=\min\{u,0\}$.   About the nontrivial
solutions of \eqref{equ:symmetric-cutoff}, we have the following
lemma.

\begin{lemma}\label{lem:positivity}
 For any $\beta\in(-1,0]$ and $\kappa\in(-1,0]$, the nontrivial solutions of \eqref{equ:symmetric-cutoff} are positive, i.e., $u>0, v>0$ in $\R^N$.
\end{lemma}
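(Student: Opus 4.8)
The plan is to argue in two stages: first that any nontrivial solution $(u,v)$ of \eqref{equ:symmetric-cutoff} has nonnegative components, and then that these are strictly positive. As a preliminary I would note that the right-hand sides of \eqref{equ:symmetric-cutoff} have cubic (hence subcritical, for $N=2,3$) growth, so elliptic regularity and Sobolev embedding give $u,v\in C^2(\R^N)\cap L^\infty(\R^N)$, which legitimizes the pointwise manipulations and the strong maximum principle used below. I would also record that a nontrivial solution has $u\not\equiv0$ and $v\not\equiv0$ (consistent with the paper's usage of \emph{nontrivial} for systems): if, say, $u\equiv0$, the first equation forces $\kappa v\equiv0$, so either $\kappa\ne0$ and then $v\equiv0$ as well, or $\kappa=0$ and the system decouples.

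For the first stage, I would test the first equation against $u^-:=\min\{u,0\}\in H^1_r(\R^N)$ and the second against $v^-$. Since $u^+u^-=0$ and $\nabla u^+\cdot\nabla u^-=0$, the cutoff cubic terms disappear, leaving
\begin{equation*}
 \|u^-\|^2+\kappa\intrn vu^-=\beta\intrn (u^-)^2v^2,\qquad \|v^-\|^2+\kappa\intrn uv^-=\beta\intrn (v^-)^2u^2 .
\end{equation*}
Adding, the right side is $\leq0$ because $\beta\leq0$. The key point is the sign of the linearly coupled cross term: writing $u=u^++u^-$, $v=v^++v^-$ gives $vu^-+uv^-=v^+u^-+u^+v^-+2u^-v^-$, where $v^+u^-\leq0$ and $u^+v^-\leq0$ pointwise while $u^-v^-\geq0$; since $\kappa\leq0$ this yields $\kappa\intrn(vu^-+uv^-)\geq 2\kappa\intrn u^-v^-$. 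Using also $2\intrn u^-v^-\leq\|u^-\|^2+\|v^-\|^2$ and $\kappa\leq0$, one arrives at
\begin{equation*}
 0\ \geq\ \|u^-\|^2+\|v^-\|^2+2\kappa\intrn u^-v^-\ \geq\ (1+\kappa)\big(\|u^-\|^2+\|v^-\|^2\big) ,
\end{equation*}
whose right side is nonnegative since $1+\kappa>0$; hence $u^-\equiv v^-\equiv0$, i.e. $u\geq0$ and $v\geq0$.

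For the second stage, with $u,v\geq0$ the system reduces to \eqref{equ:symmetric}, and its first equation can be written $-\Delta u=u\big(u^2+\beta v^2-1\big)-\kappa v$. Since $\beta\leq0$ and $v\in L^\infty(\R^N)$, we have $u^2+\beta v^2-1\geq-c_0$ with $c_0:=1-\beta|v|_\infty^2>0$, and since $u\geq0$ while $-\kappa v\geq0$, this gives $-\Delta u+c_0u\geq0$ in $\R^N$. As $u\geq0$ and $u\not\equiv0$, the strong maximum principle forces $u>0$; the same argument applied to the second equation gives $v>0$. I expect the only real difficulty to be the bookkeeping in the first stage: the positivity argument collapses unless the linearly coupled term $\kappa\intrn(vu^-+uv^-)$ has the right sign, and this works precisely because $\kappa\leq0$, in combination with $\beta\leq0$, annihilates the negative parts.
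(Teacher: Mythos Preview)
Your proof is correct and follows essentially the same approach as the paper: test the two equations against $u^-$ and $v^-$, exploit $\kappa\le 0$ and $\beta\le 0$ to obtain $(1+\kappa)(\|u^-\|^2+\|v^-\|^2)\le 0$, and then apply the strong maximum principle. The only differences are cosmetic: you spell out the regularity and the ``$u\not\equiv 0$, $v\not\equiv 0$'' point (which the paper leaves implicit), and in the second stage you replace the paper's variable coefficient $1-\beta v^2$ by the constant $c_0=1-\beta|v|_\infty^2$---both lead to the same conclusion via the strong maximum principle.
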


\begin{proof} Multiply both sides of the first equation of \eqref{equ:symmetric-cutoff} by $u^-$ and both sides of the second equation by $v^-$, then integrate on $\R^N$,
\begin{align*}
 \|u^-\|^2+\int\kappa u^-v^-\leq\|u^-\|^2+\int\kappa u^-v&=\beta\int(u^-)^2v^2\leq0,\\
 \|v^-\|^2+\int\kappa u^-v^-\leq\|v^-\|^2+\int\kappa uv^-&=\beta\int u^2(v^-)^2\leq0.
\end{align*}
Adding the above two inequalities together, and note $-1<\kappa\leq0$ we get
\begin{equation*}
 (1+\kappa)(\|u^-\|^2+\|v^-\|^2)\leq0.
\end{equation*}
Therefore, $u\geq0, v\geq0$ in $\R^N$.

From the first equation of \eqref{equ:symmetric-cutoff} and the non-negativity of $v$, we get
\begin{equation*}
\Delta u - (1-\beta v^2)u = -(u^+)^3 + \kappa v\leq0.
\end{equation*}
By the strong maximum principle, $u>0$ in $\R^N$. Similarly, $v>0$ in $\R^N$.
\end{proof}

\bigskip

According to Lemma \ref{lem:positivity}, it is easy to see that
nontrivial solutions of \eqref{equ:symmetric-cutoff} are positive
solutions of \eqref{equ:symmetric}. On the other hand, nonnegative
solutions of \eqref{equ:symmetric} are also solutions of
\eqref{equ:symmetric-cutoff}. In particular, the synchronized
solution branch $\tw^+|_\beta$ of
 \eqref{equ:symmetric} is a solution branch of system \eqref{equ:symmetric-cutoff}, and there exists a sequence of bifurcation solutions of \eqref{equ:symmetric-cutoff} along $\tw^+|_\beta$.

\begin{remark}\label{rem:relation_original-cutoff}
Denote by $\ms_l^\beta$ the global bifurcation branch of \eqref{equ:symmetric-cutoff} emanating from $\tw^+|_\beta$ at the $l$-th bifurcation point $(\kappa_l, u_{\kappa_l},
v_{\kappa_l})$. Lemma \ref{lem:positivity} implies that if
$\beta\in(-1,0]$ fixed, then for any $(\kappa, u, v)\in\ms_l^\beta$ with
$-1<\kappa\leq0$, there holds $u>0, v>0$. By the relation between
\eqref{equ:symmetric} and \eqref{equ:symmetric-cutoff}, system
\eqref{equ:symmetric} has a global bifurcation branch with positive
components for fixed $\beta\in(-1, 0]$ and parameter $\kappa\in(-1,
0]$. Whereas, the bifurcation branches with respect to $\tw^+|_\beta$ may continue beyond $\kappa=0$.
\end{remark}

\bigskip

\noindent\emph{Proof of Theorem \ref{thm:bifurcation}}~~ It follows
from Lemma \ref{thm:localbifurcation}, Lemma
\ref{lem:bifurcationEnergy}, Corollary \ref{coro:bifurcationEnergy},
Lemma \ref{lem:positivity} and Remark
\ref{rem:relation_original-cutoff}. \hfill $\blacksquare$

\begin{remark}
It is well known that a global bifurcation branch, in the sense of
Rabinowitz, either is unbounded in the product space or contains
multiple bifurcation points on $\tw^+|_\beta$. In
\cite{Bartsch-Dancer-Wang:2010, Tian-Wang:2013-jan,
Tian-Wang:2013-feb}, there are some descriptions in this
respect. So far, corresponding result for $\kappa$-bifurcation of system \eqref{equ:symmetric} or \eqref{equ:symmetric-cutoff} is still unknown.
\end{remark}

\Acknowledgements{The first author is supported by the China Postdoctoral Science Foundation. The second author is supported in part by the National Natural Science Foundation of China 11325107, 11271353, 11331010.}


\end{document}